\documentclass[smallextended,referee,envcountsect]{svjour3}
\usepackage{booktabs}
\usepackage{amscd}
\smartqed
\usepackage{graphicx,amssymb}
\usepackage{enumerate,amsfonts}
\usepackage{amsmath}
\usepackage[bookmarksnumbered, plainpages, colorlinks]{hyperref}
\usepackage{multicol,multirow}
\usepackage{subfigure}
\usepackage{threeparttable}
\usepackage[top=2cm, bottom=2cm, left=2cm, right=2cm]{geometry}
\usepackage{float}
\usepackage[section]{placeins}
\graphicspath{{figures/}}
\usepackage[ruled,vlined]{algorithm2e} 
\usepackage{bigstrut}
\usepackage[numbers,sort&compress]{natbib}

\numberwithin{equation}{section}

\setlength{\topmargin}{-0.2cm}
\setlength{\textheight}{23cm}
\setlength{\textwidth}{15cm}
\setlength{\oddsidemargin}{0cm}
\setlength{\columnsep}{0cm}
\usepackage{mathptmx}

\begin{document}
\baselineskip=16pt

\title{Several self-adaptive inertial projection algorithms for solving split variational inclusion problems}

\titlerunning{Several self-adaptive inertial projection algorithms}

\author{Zheng Zhou $^*$ \and Bing Tan \and Songxiao Li}

\authorrunning{Z. Zhou, B. Tan, S. Li}

\institute{$^*$Corresponding author.\at
 Zheng Zhou \at
\email{zhouzheng2272@163.com}\\
Bing Tan \at
\email{bingtan72@gmail.com}\\
Songxiao Li \at
\email{jyulsx@163.com}
\and
 Institute of Fundamental and Frontier Sciences, University of Electronic Science and Technology of China, Chengdu 611731, China}

\date{Received: date / Accepted: date}

\maketitle

\begin{abstract}
This paper is to analyze the  approximation solution of a split variational inclusion problem in the framework of infinite dimensional Hilbert spaces. For this purpose, several inertial hybrid and shrinking projection algorithms are proposed under the effect of self-adaptive stepsizes which does not require information of the norms of the given operators. Some strong convergence properties of the proposed algorithms are obtained under mild constraints. Finally, an experimental application is given to illustrate the performances of proposed methods by comparing existing results.

\keywords{Self-adaptive stepsize \and Projection algorithm \and Inertial technique \and Split variational inclusion problem}
\subclass{ 47H05 \and 49J40 \and 65K10 \and 65Y10}
\end{abstract}

\section{Introduction}

Inspired by the split variational inequality problem proposed by Censor et al. \cite{censor2010}, Moudafi \cite{moudafi2011split} introduced a more general form of this problem, that is, the split monotone variational inclusion problem (for short, SMVIP). It is worth noting that an important special case of the split monotone variation inclusion problem is the split variational inclusion problem (for short, SVIP), which is to find a zero of a maximal monotone mapping in one space, and the image of which under a given bounded linear transformation is a zero of another maximal monotone mapping in another space. As well as, the split variational inclusion problem is also a generalized form of many problems, such as the split variational inequality problem, the split minimization problem, the split equilibrium problem, the split saddle point problem and the split feasibility problem; see, for instance,  \cite{moudafi2011split,byrne2011weak,long2019new,qin2019,anh2020astrong} and the references therein. As applications, these problems are also widely applied to radiation therapy treatment planning, image recovery and signal recovery. For detail, we refer to \cite{zhou2020anew,chambolle1997image,nikolova2004variational}. In SVIP, when the two spaces are the same and the given bounded linear operator is an identity mapping, SVIP is equivalent to the well-known common solution problem, i.e., the common solution of two variational inclusion problems. Naturally, common solution problems of other aspects can be obtained, such as the variational inequality problem, the minimization problem and the equilibrium problem. In general, the above common solution problems can be regarded as the distinguished convex feasibility problem.

In particular, finding the zero of a maximal monotone mapping is known as the variational inclusion problem (for short, VIP), which is a special case of the SVIP. Furthermore, the resolvent mapping of the maximal monotone mapping is considered to solve the approximate solution of VIP. With the help of this resolvent mapping and the attention of a large number of scholars, the variational inclusion problem and the split variational inclusion problem has obtained quite a few remarkable results; see, e.g., \cite{qin2014var,chuang2016algorithms,kazmi2014an,Sitthithakerngkiet2018,cho2014st}, etc. On the other hand, based on the idea of the implicit discretization of a differential system of the second-order in time, Alvarez and Attouch \cite{alvarez2001} introduced an inertial proximal point algorithm to approximate a solution of the VIP. Under the effect of the inertial technique, the iterative sequence of SVIP and other problems rapidly converges to the approximation solution of the corresponding problems, such as the split variational inclusion problem \cite{long2019new,qin2019,anh2020astrong}, the split common fixed point problem \cite{zhou2020anew,zhou2020an}, the monotone inclusion problem \cite{lorenz2015an,qin2014v,tan2020strong}.

From the existing results of the split variational inclusion problem, we find that it is easy to get the weak convergence property, and sometimes its strong convergence is proved in the case of other methods, such as the viscosity method, the Halpern method, the Mann-type method, the hybrid steepest descent method, and so on, for detail, see \cite{byrne2011weak,long2019new,anh2020astrong,Sitthithakerngkiet2018}. Unfortunately, the stepsize sequences in these existing results often depends on the norm of the bounded linear operator. Hence, the work of this paper can be summarized in two aspects. The first one is to construct some new inertial iterative algorithms that converge strongly to a solution of SVIP. For this purpose, we consider two projection methods in our algorithms, namely hybrid projection \cite{NAKAJO2003372} and shrinking projection \cite{TAKAHASHI2008276}. The second one is to design a new stepsize sequence which does not need prior knowledge of the bounded linear operator in our algorithms.

The rest of the article is outlined as follows. Section \ref{sec2} introduces the split variational inclusion problem and some preliminaries. Several new iterative algorithms and their convergence theorems for SVIP are proposed in Section \ref{sec3}. Theoretical applications on other mathematical problems are given in Section \ref{sec4}. Finally, in Section \ref{sec5}, the validity and authenticity of the convergence behavior of the proposed algorithms are demonstrated by some applicable numerical examples.

\section{State of problem and preliminaries}\label{sec2}
\subsection{Split variational inclusion problem}
Let $H_1$ and $H_2$ be Hilbert spaces, $B_1:H_1\rightarrow 2^{H_1}$ and $B_2:H_2\rightarrow 2^{H_2}$ be maximal monotone mappings. Let $f_1:H_1\rightarrow H_1$ and $f_2:H_2\rightarrow H_2$ be single-valued mappings, $A:H_1\rightarrow H_2$ be a bounded linear operator. The split monotone variational inclusion problem is to find a point $x^*\in H_1$ such that
\begin{align}\label{smvip}
0\in f_1(x^*)+B_1(x^*)\ \text{ and }\ \ 0\in f_2(Ax^*)+B_2(Ax^*).\tag{SMVIP}
\end{align}
When $f_1\equiv 0$ and $f_2\equiv 0$, SMVIP can be considered as the split variational inclusion problem, which is to find a point $x^*\in H_1$ such that
\begin{align}\label{svip}
0\in B_1(x^*)\ \text{and}\ 0\in B_2(Ax^*).\tag{SVIP}
\end{align}
The solution set of SVIP is denoted by $\Omega$, i.e., $\Omega := \{x^*\in H_1: 0\in B_1(x^*)\ \text{and}\ 0\in B_2(Ax^*)\}$.

\subsection{Preliminaries}
To standardize, the notations $\rightarrow$ and $\rightharpoonup$ stand for strong convergence and weak convergence, respectively. The symbol $Fix(S)$ denotes the fixed point set of a mapping $S$. $\omega_w(x_n)$ represents the set of weak cluster point of a sequence $\{x_n\}$. Let $H$ be a Hilbert space with the inner product $\langle \cdot ,\cdot \rangle$ and the norm $\|\cdot \|$. Let $B:H\rightarrow 2^H$ be a set-valued mapping with domain $\mathcal{D}(B)=\{x\in H: B(x)\neq \emptyset \}$ and graph $\mathcal{G}(B)=\{(x,w)\in H\times H: x\in \mathcal{D}(B), w\in B(x)\}$. Recall that a mapping $B:H\rightarrow 2^H$ is monotone if and only if $\langle x-y, w-v\rangle\geq 0$ for any $w\in B(x)$ and $v\in B(y)$. Further, a monotone mapping $B:H\rightarrow 2^H$ is maximal, that is, the graph $\mathcal{G}(B)$ is not properly contained in the graph of any other monotone mapping. In this case, $B$ is a maximal monotone mapping if and only if for any $(x,w)\in \mathcal{G}(B)$ and $(y,v)\in H\times H$, $\langle x-y, w-v \rangle\geq 0$ implies $v\in B(y)$.

\begin{definition}
The mapping $S:H\rightarrow H$ is said to be
\begin{description}
\item[(I)] nonexpansive if $\|Sx-Sy\|\leq \|x-y\|$, $\forall x,y\in H$;
\item[(II)]  firmly nonexpansive if $\|Sx-Sy\|^2\leq \langle Sx-Sy, x-y\rangle$, $\forall x,y\in H$.
\end{description}
\end{definition}
\begin{remark}\label{remark21}
When $S$ is a firmly nonexpansive mapping, it is also nonexpansive and $I-S$ is also a firmly nonexpansive mapping.
\end{remark}
\begin{lemma}\cite{marino2004convergence,takahashi2000}
The resolvent mapping $J_\beta^{B}$ of a maximal monotone mapping $B$ with $\beta>0$ is defined as $J_\beta^{B}(x)=(I+\beta B)^{-1}(x), \forall x\in H$. The following properties associated with $J_\beta^{B}$ hold.
\begin{description}
\item[(1)] The mapping $J_\beta^{B}$ is a single-valued and firmly nonexpansive;
\item[(2)] The fixed point set of $J_\beta^{B}$ is equivalent to $B^{-1}(0)=\{x\in \mathcal{D}(B): 0\in B(x)\}$.
\end{description}
\end{lemma}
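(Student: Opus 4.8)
The plan is to reduce every assertion to the single set-membership that characterizes the resolvent: for $x\in H$ and $u\in H$, one has $u=J_\beta^{B}(x)$ exactly when $x\in(I+\beta B)(u)$, i.e.\ $\tfrac{1}{\beta}(x-u)\in B(u)$. Once this equivalence is in hand, all three claims follow by substituting it into the monotonicity inequality $\langle p-q,\,s-t\rangle\ge 0$ (valid whenever $s\in B(p)$ and $t\in B(q)$) together with $\beta>0$.

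First I would dispose of single-valuedness. If $u$ and $u'$ are both resolvent values of the same $x$, then $\tfrac{1}{\beta}(x-u)\in B(u)$ and $\tfrac{1}{\beta}(x-u')\in B(u')$, and feeding these two pairs into monotonicity gives $\langle u-u',\,\tfrac{1}{\beta}\big((x-u)-(x-u')\big)\rangle\ge 0$, which collapses to $-\tfrac{1}{\beta}\|u-u'\|^2\ge 0$ and hence $u=u'$. The same substitution yields firm nonexpansiveness: writing $u=J_\beta^{B}(x)$ and $v=J_\beta^{B}(y)$, monotonicity and $\beta>0$ give $\langle u-v,\,(x-y)-(u-v)\rangle\ge 0$, that is $\|u-v\|^2\le\langle u-v,\,x-y\rangle$, which is exactly the defining inequality of a firmly nonexpansive mapping; by Remark~\ref{remark21} it is then also nonexpansive. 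For statement (2) I would simply run the equivalence chain $x\in Fix(J_\beta^{B})\iff x=J_\beta^{B}(x)\iff 0\in\beta B(x)\iff 0\in B(x)\iff x\in B^{-1}(0)$, the middle step once more invoking $\beta>0$.

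The genuinely substantive point, and the one I expect to be the main obstacle, is concealed in the phrase that $J_\beta^{B}$ is \emph{defined for all} $x\in H$: this requires $\mathrm{Range}(I+\beta B)=H$, which is precisely where the \emph{maximality} of $B$ (and not merely its monotonicity) is used, via Minty's theorem. Every computation above uses only monotonicity and $\beta>0$, so the surjectivity of $I+\beta B$ is the deeper analytic ingredient; rather than reprove it, I would invoke Minty's theorem directly or defer to the cited sources \cite{marino2004convergence,takahashi2000}.
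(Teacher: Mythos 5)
Your proof is correct. Note, however, that the paper itself offers no proof of this lemma --- it is imported by citation from \cite{marino2004convergence,takahashi2000} --- so there is no in-paper argument to compare against; your reduction of every claim to the characterization $u=J_\beta^{B}(x)\iff\tfrac{1}{\beta}(x-u)\in B(u)$, followed by the monotonicity inequality, is exactly the standard textbook route, and you correctly isolate the one place where maximality (rather than mere monotonicity) is indispensable, namely the surjectivity of $I+\beta B$ via Minty's theorem, which it is reasonable to quote rather than reprove.
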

\begin{definition}
The notation $P_C$ denotes the metric projection from $H$ onto $C$, that is,  $P_Cx=\operatorname{argmin}_{y\in C} \|x-y\|,\ \forall x\in H$. Naturally, we can know the following equivalent properties of $P_C$:
\[
\langle P_Cx-x, P_Cx-y\rangle \le 0, \forall y \in C \Leftrightarrow \|y-P_{C} x\|^{2}+\|x-P_{C} x\|^{2} \leq\|x-y\|^{2}.
\]
\end{definition}

\begin{lemma}\cite{takahashi2000}\label{lem2}
Let $C$ be a nonempty closed convex subset of $H$ and $S:C\rightarrow C$ be a nonexpansive mapping with $Fix(S)\neq\emptyset$. $I-S$ is demiclosed at zero, that is, for any sequence $\{x_n\}$ in $C$, satisfying $x_n\rightharpoonup x$ and $(I-S)x_n\rightarrow0$, then $x\in Fix(S)$.
\end{lemma}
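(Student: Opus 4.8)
The plan is to prove the demiclosedness principle directly from the Hilbert-space inner product, using nothing beyond weak convergence and the nonexpansiveness of $S$. Assume $x_n\rightharpoonup x$ and $(I-S)x_n\to 0$. Since weakly convergent sequences are bounded, $\{x_n\}$, and hence $\{\|x_n-x\|\}$, are bounded; the goal is to show $\|x-Sx\|=0$, which gives $x\in Fix(S)$.

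First I would record two exact expansions of $\|x_n-Sx\|^2$. Expanding about $x$ gives
\[
\|x_n-Sx\|^2=\|x_n-x\|^2+2\langle x_n-x,\, x-Sx\rangle+\|x-Sx\|^2,
\]
whose middle term tends to $0$ because $x_n-x\rightharpoonup 0$ tested against the fixed vector $x-Sx$. On the other hand, the triangle inequality together with nonexpansiveness yields
\[
\|x_n-Sx\|\le \|x_n-Sx_n\|+\|Sx_n-Sx\|\le \|(I-S)x_n\|+\|x_n-x\|,
\]
so that $\|x_n-Sx\|^2\le \|(I-S)x_n\|^2+2\|(I-S)x_n\|\,\|x_n-x\|+\|x_n-x\|^2$. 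Combining the two displays and cancelling the common $\|x_n-x\|^2$ leaves
\[
2\langle x_n-x,\, x-Sx\rangle+\|x-Sx\|^2\le \|(I-S)x_n\|^2+2\|(I-S)x_n\|\,\|x_n-x\|.
\]
Letting $n\to\infty$, the inner product on the left vanishes by weak convergence, while the entire right-hand side vanishes since $\|(I-S)x_n\|\to 0$ and $\|x_n-x\|$ stays bounded. This forces $\|x-Sx\|^2\le 0$, hence $Sx=x$.

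The only delicate point — and the real content of the argument — is the bookkeeping in the limit: weak convergence supplies only $\langle x_n-x,\,v\rangle\to 0$ for a fixed vector $v$, never strong convergence of $x_n$, so every term that does not reduce to such an inner product must be arranged to carry a factor of $\|(I-S)x_n\|$. The estimate above is engineered precisely so that all the potentially troublesome cross terms are absorbed into quantities multiplied by $\|(I-S)x_n\|$. As an alternative, one could instead invoke Opial's property of Hilbert spaces: if $x\neq Sx$, then $\liminf_n\|x_n-x\|<\liminf_n\|x_n-Sx\|\le \liminf_n\bigl(\|(I-S)x_n\|+\|x_n-x\|\bigr)=\liminf_n\|x_n-x\|$, a contradiction; but the direct computation keeps the proof self-contained.
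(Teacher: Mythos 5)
Your proof is correct. Note first that the paper does not prove this lemma at all --- it is quoted from the cited reference as a known result (Browder's demiclosedness principle) --- so there is no in-paper argument to compare against; what you have written is the standard direct Hilbert-space proof, and the limit bookkeeping is exactly right: after cancelling $\|x_n-x\|^2$, every surviving term on the right carries a factor of $\|(I-S)x_n\|\to 0$, and the cross term on the left is an inner product of $x_n-x\rightharpoonup 0$ against the fixed vector $x-Sx$, so it vanishes, leaving $\|x-Sx\|^2\le 0$. The one step you pass over silently is that $Sx$ must be defined, i.e.\ that the weak limit $x$ actually lies in $C$: this is where the hypothesis that $C$ is closed and convex enters, since closed convex sets are weakly closed (Mazur), so $\{x_n\}\subset C$ and $x_n\rightharpoonup x$ force $x\in C$. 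With that one line added the argument is complete. The alternative you sketch via Opial's property is also valid and is the route many textbooks take; your direct computation has the advantage of not requiring Opial's inequality as a prerequisite, while the Opial route generalizes more readily to Banach spaces satisfying Opial's condition.
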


\begin{lemma}\cite{MARTINEZYANES20062400}\label{lem3}
Let $C$ be a nonempty closed convex subset of $H$. Let a sequence $\{x_n\}$ in $H$ and $u=P_Cv,\ v\in H$. If $\omega_w(x_n)\subset C$ and $\|x_n-v\|\leq\|u-v\|$, then $\{x_n\}$ converges strongly to $u$.
\end{lemma}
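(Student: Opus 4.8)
The plan is to first extract boundedness of $\{x_n\}$ from the distance bound, then pin down its weak cluster points using the defining property of the projection $P_C$, and finally upgrade weak convergence to strong convergence through an elementary norm expansion. The key point to keep in mind throughout is that the $x_n$ themselves need not lie in $C$, so the characterization of $u=P_Cv$ can only be applied to points that are known to belong to $C$, namely the weak cluster points supplied by the hypothesis $\omega_w(x_n)\subset C$.

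First I would note that $\|x_n-v\|\le\|u-v\|$ immediately gives $\|x_n\|\le\|u-v\|+\|v\|$, so $\{x_n\}$ is bounded and therefore possesses weak cluster points. Let $w\in\omega_w(x_n)$ be arbitrary, say $x_{n_k}\rightharpoonup w$ along some subsequence. The hypothesis forces $w\in C$, and by weak lower semicontinuity of the norm together with the distance bound,
\[
\|w-v\|\le\liminf_{k\to\infty}\|x_{n_k}-v\|\le\|u-v\|.
\]
Since $u=P_Cv$ is the unique nearest point of $C$ to $v$ and $w\in C$, this inequality can hold only with $w=u$. Hence every weak cluster point of $\{x_n\}$ equals $u$; because $\{x_n\}$ is bounded in a Hilbert space, this yields $x_n\rightharpoonup u$.

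It remains to promote weak convergence to strong convergence. I would expand
\[
\|x_n-u\|^2=\|x_n-v\|^2-2\langle x_n-v,u-v\rangle+\|u-v\|^2,
\]
and use $\|x_n-v\|^2\le\|u-v\|^2$ to bound the right-hand side by $2\langle u-x_n,u-v\rangle$. Since $x_n\rightharpoonup u$, the quantity $\langle u-x_n,u-v\rangle\to0$, so $\limsup_{n}\|x_n-u\|^2\le0$ and therefore $x_n\to u$ strongly.

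The main obstacle is the middle step: one must resist applying the projection inequality directly to $x_n$ (which may sit outside $C$) and instead transfer all information to the weak cluster points. The combination of weak lower semicontinuity of the norm with the uniqueness of the metric projection is precisely what collapses $\omega_w(x_n)$ to the single point $u$; once weak convergence to $u$ is secured, the weak-to-strong upgrade is a routine one-line computation.
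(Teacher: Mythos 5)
Your proof is correct and complete: the boundedness observation, the identification of every weak cluster point with $u$ via weak lower semicontinuity of the norm and uniqueness of the metric projection, and the expansion $\|x_n-u\|^2\le 2\langle u-x_n,u-v\rangle$ together give the standard argument for this result. The paper itself states this lemma only as a citation to Martinez-Yanes and Xu without reproducing a proof, and your argument is precisely the classical one used there, so there is nothing to correct.
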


\section{Several self-adaptive inertial hybrid and shrinking projection algorithms}\label{sec3}
Combining the inertial technique with the projection methods, two types of projection algorithms are given for approximating a solution of the split variational inclusion problem \eqref{svip}. Before this, we always assume that the following conditions are satisfied:
\begin{description}
	\item[\textbf{(C1)}] $H_1$, $H_2$ are two Hilbert spaces and $A:H_1\rightarrow H_2$ is a bounded linear operator with adjoint operator $A^*$;
	\item[\textbf{(C2)}] $B_1: H_1\rightarrow 2^{H_1}$ and $B_2: H_2\rightarrow 2^{H_2}$ are two set-valued maximal monotone mappings.
\end{description}

Inertial hybrid projection algorithms and inertial shrinking projection algorithms are introduced below and the strong convergence of these algorithms are guaranteed by the following appropriate parameter conditions:
\begin{description}
	\item[\textbf{(P1)}] $\{\alpha_n\}\subset [a, b]\subset (-\infty, \infty)$ and $\{\beta_n\}\subset(0,\infty)$;
	\item[\textbf{(P2)}] If $Az_n\notin B_2^{-1}0$, the stepsize $\gamma_n=\frac{\sigma_n\|(I-J_{\beta_n}^{B_2})Az_n\|^2}{\|A^*(I-J_{\beta_n}^{B_2})Az_n\|^2}$ with $\sigma_n\in(0, 2)$. Otherwise, $\gamma_n=0$.
\end{description}

\subsection{The strong convergence of inertial hybrid projection algorithm}
{\bf{Algorithm 3.1}} Given appropriate parameter sequences $\{\alpha_n\}$, $\{\beta_n\}$ and $\{\gamma_n\}$, for any $x_0,\ x_1\in H_1$, the sequence $\{x_n\}$ is constructed by the following iterative form.
\begin{equation*}
\left\{\begin{aligned}
&{z_{n}=x_{n}+\alpha_{n}(x_{n}-x_{n-1}),}\\
&{u_n=J_{\beta_n}^{B_1}\left(z_n-\gamma_nA^*(I-J_{\beta_n}^{B_2})Az_n\right),}\\
&{C_{n}=\{x\in H_1:\|u_{n}-x\|^2\leq \|z_{n}-x\|^2-\theta_n\},}\\
&{Q_{n}=\{x\in H_1:\langle x_n-x_1,x_n-x \rangle\leq 0\},}\\
&{x_{n+1}=P_{C_{n}\bigcap Q_{n}}x_1,\ n\geq1,}
\end{aligned}\right.
\end{equation*}
where
\begin{equation*}
\theta_n=\gamma_n\left(2\|(I-J_{\beta_n}^{B_2})Az_n\|^2-\gamma_n\|A^*(I-J_{\beta_n}^{B_2})Az_n\|^2\right).
\end{equation*}

\begin{lemma}\label{lem1}
Assumed that (C1)-(C2) hold. For any $\gamma_n>0$, $\beta_n>0$ and set $u_n=J_{\beta_n}^{B_1}(z_n-\gamma_nA^*(I-J_{\beta_n}^{B_2})Az_n)$. Then,
\[
\|u_n-x\|^2\leq\|z_n-x\|^2-\gamma_n\left(2\|(I-J_{\beta_n}^{B_2})Az_n\|^2-\gamma_n\|A^*(I-J_{\beta_n}^{B_2})Az_n\|^2\right),\ \forall x\in \Omega,\ n\geq 1.
\]
\begin{proof}
For any $x\in \Omega$, we have $x\in B_1^{-1}(0)$ and $Ax\in B_2^{-1}(0)$. According to the property of firmly nonexpansive mappings $J_{\beta_n}^{B_1}$, $J_{\beta_n}^{B_2}$ and $I-J_{\beta_n}^{B_2}$, we have
\[
\begin{aligned}
\|u_{n} -x\|^2
&=\|J_{\beta_n}^{B_1}(z_n-\gamma_nA^*(I-J_{\beta_n}^{B_2})Az_n)-x\|^2\\
&\leq\|z_n-\gamma_nA^*(I-J_{\beta_n}^{B_2})Az_n-x\|^2\\
&=\|z_n-x\|^2+\gamma_n^2\|A^*(I-J_{\beta_n}^{B_2})Az_n\|^2-2\gamma_n\langle z_n-x, A^*(I-J_{\beta_n}^{B_2})Az_n\rangle\\
&=\|z_n-x\|^2+\gamma_n^2\|A^*(I-J_{\beta_n}^{B_2})Az_n\|^2-2\gamma_n\langle Az_n-Ax, (I-J_{\beta_n}^{B_2})Az_n-(I-J_{\beta_n}^{B_2})Ax\rangle\\
&\leq\|z_n-x\|^2+\gamma_n^2\|A^*(I-J_{\beta_n}^{B_2})Az_n\|^2-2\gamma_n\|(I-J_{\beta_n}^{B_2})Az_n\|^2\\
&=\|z_n-x\|^2-\gamma_n\left(2\|(I-J_{\beta_n}^{B_2})Az_n\|^2-\gamma_n\|A^*(I-J_{\beta_n}^{B_2})Az_n\|^2\right).
\end{aligned}
\]\qed
\end{proof}
\end{lemma}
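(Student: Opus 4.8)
The plan is to exploit that every $x \in \Omega$ is simultaneously a fixed point of the resolvent $J_{\beta_n}^{B_1}$ and forces the residual of $Ax$ under $I - J_{\beta_n}^{B_2}$ to vanish, and then to combine the nonexpansiveness of $J_{\beta_n}^{B_1}$ with the firm nonexpansiveness of $I - J_{\beta_n}^{B_2}$. First I would record the two consequences of $x \in \Omega$: since $0 \in B_1(x)$, part (2) of the resolvent lemma gives $J_{\beta_n}^{B_1} x = x$; and since $0 \in B_2(Ax)$, the same fact gives $J_{\beta_n}^{B_2}(Ax) = Ax$, i.e. $(I - J_{\beta_n}^{B_2})Ax = 0$. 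These two identities are what make the solution point interact nicely with both resolvents.

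Next, because $J_{\beta_n}^{B_1}$ is firmly nonexpansive (hence nonexpansive by Remark \ref{remark21}) and fixes $x$, I would bound $\|u_n - x\|^2 \le \|z_n - \gamma_n A^*(I - J_{\beta_n}^{B_2})Az_n - x\|^2$ and expand the right-hand side into the three terms $\|z_n - x\|^2$, $\gamma_n^2\|A^*(I - J_{\beta_n}^{B_2})Az_n\|^2$, and the cross term $-2\gamma_n\langle z_n - x,\, A^*(I - J_{\beta_n}^{B_2})Az_n\rangle$. This is just a routine expansion of a squared norm.

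The crux is the cross term. Moving $A^*$ across the inner product via the adjoint identity $\langle v, A^* w\rangle = \langle Av, w\rangle$ turns it into $\langle Az_n - Ax,\, (I - J_{\beta_n}^{B_2})Az_n\rangle$, and here I would insert the vanishing residual $(I - J_{\beta_n}^{B_2})Ax = 0$ to rewrite the second slot as $(I - J_{\beta_n}^{B_2})Az_n - (I - J_{\beta_n}^{B_2})Ax$. This is exactly the configuration to which the firm nonexpansiveness of $I - J_{\beta_n}^{B_2}$ applies, yielding $\langle Az_n - Ax,\, (I - J_{\beta_n}^{B_2})Az_n - (I - J_{\beta_n}^{B_2})Ax\rangle \ge \|(I - J_{\beta_n}^{B_2})Az_n\|^2$.

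Substituting this lower bound on the cross term (with its minus sign) and collecting the $\gamma_n$ factors then produces the stated inequality. The only genuine obstacle is spotting the decomposition that exposes the firm nonexpansiveness of $I - J_{\beta_n}^{B_2}$, namely subtracting the vanishing term $(I - J_{\beta_n}^{B_2})Ax$; once this is in place, the remainder is bookkeeping, and the nonexpansiveness of $J_{\beta_n}^{B_1}$ already discards the proximal part. I would close by noting that the bracketed quantity is precisely $\theta_n$ from Algorithm 3.1, which is why this lemma is exactly what is needed to justify that definition and the set $C_n$.
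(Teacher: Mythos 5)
Your proposal is correct and follows essentially the same route as the paper's proof: nonexpansiveness of $J_{\beta_n}^{B_1}$ at the fixed point $x$, expansion of the squared norm, passing $A^*$ across the inner product, inserting the vanishing residual $(I-J_{\beta_n}^{B_2})Ax=0$, and invoking firm nonexpansiveness of $I-J_{\beta_n}^{B_2}$ to bound the cross term. No gaps.
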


\begin{theorem}\label{sft1}
Assumed that (C1)-(C2) and (P1)-(P2) hold. If the solution set $\Omega$ is nonempty, the iterative sequence $\{x_n\}$ generated by {Algorithm 3.1} converges strongly to $x^*=P_\Omega x_1\in \Omega$.
\end{theorem}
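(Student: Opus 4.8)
The plan is to run the classical hybrid (CQ) projection scheme of Nakajo--Takahashi, using Lemma~\ref{lem1} to supply the feasibility estimate that populates $C_n$ and Lemma~\ref{lem3} to produce the strong limit at the end. The first task is to show the iteration is well defined, i.e. that $C_n\cap Q_n$ is a nonempty closed convex set for every $n$. Expanding the inequality $\|u_n-x\|^2\le\|z_n-x\|^2-\theta_n$ the terms $\|x\|^2$ cancel and leave an affine inequality in $x$, so $C_n$ is a half-space; $Q_n$ is visibly a half-space; hence both are closed and convex. That they contain $\Omega$ I prove by induction: $\Omega\subset C_n$ is exactly Lemma~\ref{lem1} (its right-hand quantity is $\theta_n$), while $Q_1=H_1$, and if $\Omega\subset C_n\cap Q_n$ then $x_{n+1}=P_{C_n\cap Q_n}x_1$ obeys $\langle x_1-x_{n+1},x-x_{n+1}\rangle\le0$ for all $x\in C_n\cap Q_n\supset\Omega$, which is precisely $\Omega\subset Q_{n+1}$. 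Thus $\emptyset\ne\Omega\subset C_n\cap Q_n$ and $x_{n+1}$ exists.

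Next I would extract boundedness and asymptotic regularity. The definition of $Q_n$ together with the projection characterization shows $x_n=P_{Q_n}x_1$; since $x_{n+1}\in Q_n$ this gives $\|x_1-x_n\|\le\|x_1-x_{n+1}\|$, while $x_{n+1}=P_{C_n\cap Q_n}x_1$ with $\Omega\subset C_n\cap Q_n$ gives $\|x_1-x_{n+1}\|\le\|x_1-x^*\|$ for $x^*=P_\Omega x_1$. Hence $\{\|x_1-x_n\|\}$ is nondecreasing and bounded, so it converges and $\{x_n\}$ is bounded. Inserting $y=x_{n+1}\in Q_n$ into the projection inequality stated in the definition of $P_C$ yields $\|x_{n+1}-x_n\|^2\le\|x_1-x_{n+1}\|^2-\|x_1-x_n\|^2\to0$, so $\|x_{n+1}-x_n\|\to0$; and since $z_n-x_n=\alpha_n(x_n-x_{n-1})$ with $\{\alpha_n\}$ bounded by (P1), I get $\|z_n-x_n\|\to0$ and hence $\|z_n-x_{n+1}\|\to0$.

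The heart of the argument is to make the residuals vanish. Because $x_{n+1}\in C_n$ we have $\|u_n-x_{n+1}\|^2\le\|z_n-x_{n+1}\|^2-\theta_n$, and since $\sigma_n<2$ forces $\theta_n=\gamma_n(2-\sigma_n)\|(I-J_{\beta_n}^{B_2})Az_n\|^2\ge0$, both $\|u_n-x_{n+1}\|\to0$ and $\theta_n\to0$, so $\|u_n-z_n\|\to0$. The self-adaptive step is then controlled by two identities: writing $s_n:=\|(I-J_{\beta_n}^{B_2})Az_n\|$ and using $\|A^*\cdot\|\le\|A\|\,\|\cdot\|$ one gets $\theta_n\ge\sigma_n(2-\sigma_n)s_n^2/\|A\|^2$, and from $\gamma_n\|A^*(I-J_{\beta_n}^{B_2})Az_n\|^2=\sigma_n s_n^2$ one computes $\|\gamma_nA^*(I-J_{\beta_n}^{B_2})Az_n\|^2=\tfrac{\sigma_n}{2-\sigma_n}\theta_n$. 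Provided $\sigma_n$ stays bounded away from $0$ and $2$, these give $\|(I-J_{\beta_n}^{B_2})Az_n\|\to0$ and $\|\gamma_nA^*(I-J_{\beta_n}^{B_2})Az_n\|\to0$ from $\theta_n\to0$; consequently $w_n:=z_n-\gamma_nA^*(I-J_{\beta_n}^{B_2})Az_n$ satisfies $w_n-z_n\to0$ and $w_n-u_n\to0$.

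It remains to identify the weak cluster points. For $p\in\omega_w(x_n)$ with $x_{n_k}\rightharpoonup p$, the estimates above give $z_{n_k}\rightharpoonup p$, $u_{n_k}\rightharpoonup p$ and $Az_{n_k}\rightharpoonup Ap$. Using the resolvent inclusions $\tfrac{w_n-u_n}{\beta_n}\in B_1(u_n)$ and $\tfrac{Az_n-J_{\beta_n}^{B_2}Az_n}{\beta_n}\in B_2(J_{\beta_n}^{B_2}Az_n)$, passing to the limit in the monotonicity inequality against an arbitrary pair $(y,v)$ in the graph, and invoking the maximality characterization recorded before the definition of $P_C$, I obtain $0\in B_1(p)$ and $0\in B_2(Ap)$, i.e. $p\in\Omega$; this step needs $\liminf_n\beta_n>0$ so that the two ``residual over $\beta_n$'' terms tend to zero. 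Finally, combining $\omega_w(x_n)\subset\Omega$ with $\|x_n-x_1\|\le\|x^*-x_1\|$ and applying Lemma~\ref{lem3} (with $v=x_1$, $u=x^*=P_\Omega x_1$) yields $x_n\to P_\Omega x_1$. I expect the main obstacle to be precisely the residual-vanishing step of the third paragraph: the adaptive stepsize $\gamma_n$ is a ratio that could a priori blow up or collapse, and it is only the two algebraic identities above, together with the parameter ranges in (P1)--(P2) being taken strong enough ($\sigma_n$ bounded inside $(0,2)$ and $\liminf\beta_n>0$), that let the maximal-monotonicity limit close.
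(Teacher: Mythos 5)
Your proposal is correct, and its skeleton (half-space structure of $C_n$ and $Q_n$, induction for $\Omega\subset C_n\cap Q_n$, monotonicity of $\|x_1-x_n\|$ via $x_n=P_{Q_n}x_1$, asymptotic regularity, $\theta_n\to0$, and the final appeal to Lemma~\ref{lem3}) coincides with the paper's argument. Where you genuinely diverge is the identification of weak cluster points: the paper deduces $\|z_n-J_{\beta_n}^{B_1}z_n\|\to0$ and $\|(I-J_{\beta_n}^{B_2})Az_n\|\to0$ and then invokes the demiclosedness principle (Lemma~\ref{lem2}) to conclude $q\in Fix(J_{\beta_n}^{B_1})$ and $Aq\in Fix(J_{\beta_n}^{B_2})$, whereas you pass to the limit in the graphs of $B_1$ and $B_2$ using the resolvent inclusions $\tfrac{w_n-u_n}{\beta_n}\in B_1(u_n)$ and $\tfrac{(I-J_{\beta_n}^{B_2})Az_n}{\beta_n}\in B_2(J_{\beta_n}^{B_2}Az_n)$ together with the maximality characterization. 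Your route is in fact the more careful one: Lemma~\ref{lem2} is stated for a single fixed nonexpansive mapping, while $J_{\beta_n}^{B_i}$ varies with $n$, so the paper's invocation of demiclosedness implicitly requires either a fixed $\beta$ or an extra resolvent-identity argument. Relatedly, you are right to flag that the residual-vanishing step needs $\sigma_n$ bounded away from $0$ and $2$ and that your graph-limit step needs $\liminf_n\beta_n>0$; conditions (P1)--(P2) as printed only give $\sigma_n\in(0,2)$ and $\beta_n\in(0,\infty)$, and the paper's proof silently uses the stronger versions at exactly the same points (when it infers $\|(I-J_{\beta_n}^{B_2})Az_n\|\to0$ from $\theta_n\to0$). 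So your proposal is not only sound but makes explicit two hypotheses the paper leaves implicit; the only thing the paper's demiclosedness route buys is brevity.
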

\begin{proof}
Firstly, we show that $P_{C_{n}\bigcap Q_{n}}$ is well defined and $\Omega\subset {C_{n}\bigcap Q_{n}}$.

From the definition of $C_n$ and $Q_n$, it is obvious that the sets $C_{n}$, $Q_{n}$ are convex and closed, which implies that $P_{C_{n}\bigcap Q_{n}}$ is well defined. For any $p\in \Omega$, it follows from Lemma \ref{lem1} that $\Omega\subset C_{n}$. In addition, $ Q_1=\{x\in H_1:\langle x_1-x_1,x_1-x \rangle\leq 0\}=H_1$, then $\Omega\subset Q_{1}$. Further, suppose $\Omega\subset {C_{n-1}\bigcap Q_{n-1}}$, using the property of metric projection and $x_{n}=P_{C_{n-1}\bigcap Q_{n-1}}x_1$, we get
\begin{equation*}
\langle x_{n}-x_1, x_{n}-x\rangle\leq 0 \,,\ \forall x\in C_{n-1}\cap Q_{n-1};
\end{equation*}
\begin{equation*}
\langle x_{n}-x_1, x_{n}-p\rangle\leq 0 \,,\ \forall p\in \Omega.
\end{equation*}
This implies that $\Omega\subset Q_{n}$. Hence, $\Omega\subset C_{n}\bigcap Q_{n}$, $n\geq 1$.

Afterwards, we show that iterative sequence $\{x_n\}$ is bounded and $\|x_{n+1}-x_n\|\rightarrow 0$ as $n\rightarrow \infty$.

Since $\Omega$ is a nonempty closed convex set, there exists a point $x^*=P_\Omega x_1\in \Omega$. Combining $x_{n+1}=P_{C_n\cap Q_n}x_1$ with $\Omega\subset C_n\cap Q_{n}$, we have $\|x_1-x_{n+1}\|\leq\|x_1-x^*\|$. Accordingly, the sequence $\{\|x_1-x_n\|\}$ is bounded, i.e., the sequence $\{x_n\}$ is bounded. From the definition of $Q_n$ and $x_{n+1}=P_{C_n\cap Q_n}x_1\in Q_n$, we get $x_n=P_{Q_n}x_1$ and $\|x_1-x_n\|\leq\|x_1-x_{n+1}\|$. These indicate that $\lim_{n\rightarrow \infty}\|x_1-x_n\|$ exists. Further, it follows from the property of metric projection $P_{Q_n}$ that
\[
\|x_n-x_{n+1}\|^2\leq\|x_1-x_{n+1}\|^2-\|x_1-x_n\|^2.
\]
This implies $\lim_{n\rightarrow \infty}\|x_n-x_{n+1}\|=0$.

Lastly, we prove that the sequence $\{x_n\}$ converges strongly to $x^*=P_\Omega x_1$.

From the boundedness of $\{x_n\}$, there exists a subsequence $\{x_{n_l}\}$  of $\{x_n\}$ converges weakly to $q$, for any $q\in \omega_w(x_n)$. Furthermore, $\|z_n-x_n\|=\alpha_{n}\|x_{n}-x_{n-1}\|\rightarrow 0$, as $n\rightarrow \infty$. This implies that $\{z_n\}$ is bounded and $z_{n_l}\rightharpoonup q$. From (P2) and Algorithm 3.1, we have $\|u_n-x_{n+1}\|^2\leq\|z_n-x_{n+1}\|^2-\theta_n\leq\|z_n-x_{n+1}\|^2$. In addition,
\[
\begin{aligned}
\|u_n-z_n\|&\leq\|u_n-x_{n}\|+\|x_n-z_{n}\|\\
&\leq\|u_n-x_{n+1}\|+\|x_n-x_{n+1}\|+\|x_n-z_n\|\\
&\leq 2\|z_n-x_n\|+2\|x_n-x_{n+1}\|\rightarrow 0,\  n\rightarrow\infty.
\end{aligned}
\]
Hence, the sequence $\{u_n\}$ is bounded. Using Lemma \ref{lem1}, for any $p\in \Omega$,
\begin{align*}
\theta_n&\leq \|z_{n}-p\|^2-\|u_{n}-p\|^2\\
&\leq (\|z_n-p\|-\|u_n-p\|)(\|z_n-p\|+\|u_n-p\|)\\
&\leq \|z_n-u_n\|(\|z_n-z\|+\|u_n-p\|)\rightarrow 0,\ n\rightarrow\infty.
\end{align*}
If $Az_n\notin B_2^{-1}0$, from the definition of $\theta_n$, $\lim_{n\rightarrow \infty}\|(I-J_{\beta_n}^{B_2})Az_n\|=0$. On the other hand, from the definition of $u_n$ and the firmly nonexpansive property of $J_{\beta_n}^{B_1}$, we obtain
\[
\|u_n-J_{\beta_n}^{B_1}z_n\|\leq\|\gamma_nA^*(I-J_{\beta_n}^{B_2})Az_n\|\leq\gamma_n\|A\|\|(I-J_{\beta_n}^{B_2})Az_n\|\rightarrow 0,\ \text{as}\ n\rightarrow\infty.
\]
Therefore, we also have $\lim_{n\rightarrow\infty}\|z_n-J_{\beta_n}^{B_1}z_n\|=0$. Since $A$ is a bounded linear operator, we get $Az_{n_l}\rightharpoonup Aq$. By Remark \ref{remark21} and Lemma \ref{lem2}, it follows that $q\in Fix(J_{\beta_n}^{B_1})$ and $Aq\in Fix(J_{\beta_n}^{B_2})$, that is, $q\in \Omega$. Meanwhile, if $Az_n\in B_2^{-1}0$, we can also get the same result. In summary, we have $\omega_w(x_n)\subset\Omega$ and $\|x_n-x_1\|\leq\|x^*-x_1\|$. By virtue of Lemma \ref{lem3}, we obtain that $\{x_n\}$ converges strongly to $x^*=P_\Omega x_1$.\qed
\end{proof}

\subsection{The strong convergence of inertial shrinking projection algorithms}
{\bf{Algorithm 3.2}} Given appropriate parameter sequences $\{\alpha_n\}$, $\{\beta_n\}$ and $\{\gamma_n\}$, for any $x_0,\ x_1\in H_1$, the sequence $\{x_n\}$ is constructed by the following iterative process.
\begin{equation*}
\left\{\begin{aligned}
&{z_{n}=x_{n}+\alpha_{n}(x_{n}-x_{n-1}),}\\
&{u_n=J_{\beta_n}^{B_1}\left(z_n-\gamma_nA^*(I-J_{\beta_n}^{B_2})Az_n\right),}\\
&{x_{n+1}=P_{C_{n+1}}x_1, n\geq 1,}
\end{aligned}\right.
\end{equation*}
where
\begin{equation*}
C_{n+1}=\left\{x\in C_n: \|u_n-x\|^2\leq\|z_n-x\|^2-\gamma_n\left(2\|(I-J_{\beta_n}^{B_2})Az_n\|^2-\gamma_n\|A^*(I-J_{\beta_n}^{B_2})Az_n\|^2\right)\right\}.
\end{equation*}
{\bf{Algorithm 3.3}} Given appropriate parameter sequences $\{\alpha_n\}$, $\{\beta_n\}$ and $\{\gamma_n\}$, for any $x_0,\ x_1\in H_1$, the sequence $\{x_n\}$ is constructed by the following iterative process.
\begin{equation*}
\left\{\begin{aligned}
&{z_{n}=x_{n}+\alpha_{n}(x_{n}-x_{n-1}),}\\
&{u_n=J_{\beta_n}^{B_1}\left(z_n-\gamma_nA^*(I-J_{\beta_n}^{B_2})Az_n\right),}\\
&{x_{n+1}=P_{C_{n+1}}x_n, n\geq 1,}
\end{aligned}\right.
\end{equation*}
where
\begin{equation*}
C_{n+1}=\left\{x\in C_n: \|u_n-x\|^2\leq\|z_n-x\|^2-\gamma_n\left(2\|(I-J_{\beta_n}^{B_2})Az_n\|^2-\gamma_n\|A^*(I-J_{\beta_n}^{B_2})Az_n\|^2\right)\right\}
\end{equation*}

\begin{theorem}\label{sft2}
Assumed that (C1)-(C2) and (P1)-(P2) hold. If the solution set $\Omega$ is nonempty, the iterative sequence $\{x_n\}$ generated by {Algorithm 3.2} converges strongly to $x^*=P_\Omega x_1\in \Omega$.
\end{theorem}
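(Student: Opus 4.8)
The plan is to follow the template of the proof of Theorem \ref{sft1}, observing that Algorithm 3.2 differs from Algorithm 3.1 only in that the pair of constraint sets $C_n\cap Q_n$ is replaced by a single \emph{nested} family $\{C_n\}$ with $x_{n+1}=P_{C_{n+1}}x_1$. Accordingly I would organize the argument into four stages: (i) each $C_n$ is a closed convex set with $\Omega\subset C_n$, so that $P_{C_{n+1}}$ is well defined; (ii) $\{x_n\}$ is bounded and $\|x_{n+1}-x_n\|\to0$; (iii) the residuals $\|(I-J_{\beta_n}^{B_2})Az_n\|$ and $\|z_n-J_{\beta_n}^{B_1}z_n\|$ tend to zero; and (iv) $\omega_w(x_n)\subset\Omega$, whence Lemma \ref{lem3} yields strong convergence to $P_\Omega x_1$.

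First I would verify well-definedness. Expanding the two squared norms in the defining inequality of $C_{n+1}$ cancels the $\|x\|^2$ terms and leaves a linear inequality in $x$, so $C_{n+1}$ is the intersection of a half-space with $C_n$; seeding the induction with $C_1=H_1$ shows that every $C_n$ is closed and convex and that $C_{n+1}\subset C_n$. For the inclusion $\Omega\subset C_n$ I would invoke Lemma \ref{lem1}: for any $p\in\Omega$ it gives exactly the inequality defining membership in $C_{n+1}$, so $p\in C_n$ implies $p\in C_{n+1}$, and induction yields $\Omega\subset C_n$ for all $n$. In particular $x^*=P_\Omega x_1$ lies in every $C_n$.

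The genuinely new step relative to Theorem \ref{sft1} is extracting boundedness and asymptotic regularity from the shrinking structure rather than from the half-spaces $Q_n$. Since $x_n=P_{C_n}x_1$, $x_{n+1}\in C_{n+1}\subset C_n$, and $x^*\in C_n$, the projection characterization gives both $\|x_1-x_n\|\le\|x_1-x_{n+1}\|$ and $\|x_1-x_n\|\le\|x_1-x^*\|$; thus $\{\|x_1-x_n\|\}$ is nondecreasing and bounded, hence convergent, and $\{x_n\}$ is bounded. Applying the projection inequality with $y=x_{n+1}\in C_n$ then yields $\|x_{n+1}-x_n\|^2\le\|x_1-x_{n+1}\|^2-\|x_1-x_n\|^2\to0$. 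I expect this bookkeeping — together with correctly seeding $C_1=H_1$ — to be the main, if mild, obstacle; once it is in place the remainder transfers verbatim.

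Indeed, stages (iii) and (iv) then run exactly as in Theorem \ref{sft1}. From $\|z_n-x_n\|=\alpha_n\|x_n-x_{n-1}\|\to0$ and $\|u_n-x_{n+1}\|^2\le\|z_n-x_{n+1}\|^2-\theta_n\le\|z_n-x_{n+1}\|^2$ I would deduce $\|u_n-z_n\|\to0$, and then bound $\theta_n\le\|z_n-u_n\|(\|z_n-p\|+\|u_n-p\|)\to0$ via Lemma \ref{lem1}. Since (P2) makes $\theta_n=\gamma_n(2-\sigma_n)\|(I-J_{\beta_n}^{B_2})Az_n\|^2\ge0$, this forces $\|(I-J_{\beta_n}^{B_2})Az_n\|\to0$, and the firmly nonexpansive estimate for $J_{\beta_n}^{B_1}$ gives $\|z_n-J_{\beta_n}^{B_1}z_n\|\to0$. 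For any weak cluster point $q$ of a subsequence $z_{n_l}$, demiclosedness (Lemma \ref{lem2}, using Remark \ref{remark21}) yields $q\in Fix(J_{\beta_n}^{B_1})$ and $Aq\in Fix(J_{\beta_n}^{B_2})$, i.e. $q\in\Omega$, so $\omega_w(x_n)\subset\Omega$. Combined with $\|x_n-x_1\|\le\|x^*-x_1\|$, Lemma \ref{lem3} delivers $x_n\to x^*=P_\Omega x_1$.
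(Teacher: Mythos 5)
Your proposal is correct and follows essentially the same route as the paper: establish that the nested sets $C_n$ are closed, convex and contain $\Omega$, derive boundedness and $\|x_{n+1}-x_n\|\to 0$ from the monotonicity of $\{\|x_n-x_1\|\}$ under the shrinking structure, and then reuse the residual and demiclosedness arguments of Theorem \ref{sft1} together with Lemma \ref{lem3}. The paper's own proof is terser (it defers the last two stages to Theorem \ref{sft1} wholesale), but your expanded bookkeeping, including the seeding $C_1=H_1$ and the explicit projection inequality, is exactly what that deferral relies on.
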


\begin{proof}
Firstly, it is obvious that the half space $C_n\ (n\geq 1)$ is convex and closed and $P_{C_n}$ is well defined. By Lemma \ref{lem1}, we can easily get that the solution set $\Omega\subset C_n$. Using $x_n=P_{C_n}x_1$, $x_{n+1}=P_{C_{n+1}}x_1$ and $C_{n+1}\subset C_n$, we have $\|x_n-x_1\|\leq\|x_{n+1}-x_1\|$, which implies that $\{\|x_n-x_1\|\}$ is nondecreasing. Furthermore, $\|x_n-x_1\|\leq\|p-x_1\|$, for any $p\in \Omega$, that is, $\{x_n\}$ is bounded. These imply that $\lim_{n\rightarrow \infty}\|x_n-x_1\|$ exists. According to the proof in Theorem \ref{sft1}, we also prove that the sequence $\{x_n\}$ converges strongly to $x^*=P_{\Omega}x_1$.\qed
\end{proof}

\begin{theorem}\label{sft3}
Assumed that (C1)-(C2) and (P1)-(P2) hold. If the solution set $\Omega$ is nonempty, the iterative sequence $\{x_n\}$ generated by {Algorithm 3.3} converges strongly to $x^*=P_\Omega x_1\in \Omega$.
\end{theorem}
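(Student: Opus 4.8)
The plan is to follow the architecture of the proof of Theorem \ref{sft1}, replacing the fixed anchor of Algorithm 3.2 by the moving anchor in $x_{n+1}=P_{C_{n+1}}x_n$ and compensating for this change only at the very last step. First I would record the structural facts: each constraint $\|u_n-x\|^2\le\|z_n-x\|^2-\theta_n$ is affine in $x$, so $C_{n+1}=C_n\cap\{\cdot\}$ is closed and convex, the family is nested ($C_{n+1}\subset C_n$), and Lemma \ref{lem1} gives $\Omega\subset C_{n+1}$; hence every $P_{C_{n+1}}$ is well defined and $\Omega\subset\bigcap_n C_n$. Because the current iterate (and not $x_1$) is projected, the driving estimate is now a Fej\'er-type inequality: for any $p\in\Omega\subset C_{n+1}$, the characterization of the metric projection applied with $x=x_n$ and $y=p$ yields
\begin{equation*}
\|x_{n+1}-p\|^2+\|x_{n+1}-x_n\|^2\le\|x_n-p\|^2 .
\end{equation*}
This makes $\{\|x_n-p\|\}$ nonincreasing, so $\{x_n\}$ is bounded and $\lim_{n\to\infty}\|x_n-p\|$ exists; telescoping the same inequality forces $\sum_n\|x_{n+1}-x_n\|^2<\infty$, in particular $\|x_{n+1}-x_n\|\to0$.

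The second paragraph reproduces the asymptotic-regularity part of Theorem \ref{sft1} almost verbatim. From (P1) and $\|z_n-x_n\|=\alpha_n\|x_n-x_{n-1}\|$ one gets $\|z_n-x_n\|\to0$, so $\{z_n\}$ is bounded and shares the weak cluster points of $\{x_n\}$. Since $x_{n+1}\in C_{n+1}$, the definition of $C_{n+1}$ reads $\|u_n-x_{n+1}\|^2\le\|z_n-x_{n+1}\|^2-\theta_n$, whence $\|u_n-x_{n+1}\|\le\|z_n-x_{n+1}\|$ and therefore $\|u_n-z_n\|\le 2\|z_n-x_{n+1}\|\le 2(\|z_n-x_n\|+\|x_n-x_{n+1}\|)\to0$. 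Inserting this into Lemma \ref{lem1} exactly as in Theorem \ref{sft1} gives $\theta_n\to0$, hence $\|(I-J_{\beta_n}^{B_2})Az_n\|\to0$ and then $\|z_n-J_{\beta_n}^{B_1}z_n\|\to0$. For a weak cluster point $q$ with $z_{n_l}\rightharpoonup q$, boundedness of $A$ gives $Az_{n_l}\rightharpoonup Aq$, and the demiclosedness of $I-J_{\beta_n}^{B_1}$ and $I-J_{\beta_n}^{B_2}$ (Remark \ref{remark21} and Lemma \ref{lem2}) yields $q\in Fix(J_{\beta_n}^{B_1})$ and $Aq\in Fix(J_{\beta_n}^{B_2})$, that is, $\omega_w(x_n)\subset\Omega$.

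The last step, upgrading $\omega_w(x_n)\subset\Omega$ to strong convergence, is where I expect the real work, and it is genuinely different from Theorems \ref{sft1}--\ref{sft2}. There the conclusion came from Lemma \ref{lem3}, which needs $\|x_n-x_1\|\le\|x^\ast-x_1\|$ with $x^\ast=P_\Omega x_1$; that bound is produced by the anchored identity $x_n=P_{C_n}x_1$ and is \emph{not} available here, since $x_{n+1}=P_{C_{n+1}}x_n$ only delivers monotonicity of $\|x_n-p\|$. My plan is instead to prove that $\{x_n\}$ is Cauchy directly from the shrinking structure: for $m>n$ one has $x_m\in C_m\subset C_{n+1}$, so the projection characterization of $x_{n+1}=P_{C_{n+1}}x_n$ at $y=x_m$ gives $\|x_{n+1}-x_m\|^2\le\|x_n-x_m\|^2-\|x_{n+1}-x_n\|^2$, which I would combine with the already-established convergence of $\{\|x_n-p\|\}$ and with $\|x_{n+1}-x_n\|\to0$ to control $\|x_n-x_m\|$ uniformly for large $n$; once $\{x_n\}$ is Cauchy its strong limit lies in $\omega_w(x_n)\subset\Omega$ and, identified through the projection relations, equals $P_\Omega x_1$. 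The main obstacle is precisely making this Cauchy estimate airtight: the nested obtuse-angle inequalities most naturally yield a \emph{lower} bound $\|x_n-x_m\|^2\ge\sum_{k=n}^{m-1}\|x_{k+1}-x_k\|^2$, so extracting from them the matching \emph{upper} bound is the delicate point, and it is exactly the price paid for projecting the moving iterate $x_n$ rather than the fixed point $x_1$.
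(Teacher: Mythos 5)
Your first two paragraphs are correct and, if anything, more careful than the paper's own treatment: the Fej\'er-type inequality $\|x_{n+1}-p\|^2+\|x_{n+1}-x_n\|^2\le\|x_n-p\|^2$ is the right replacement for the anchored monotonicity of $\|x_n-x_1\|$, and the derivation of $\|u_n-z_n\|\to0$, $\theta_n\to0$ and $\omega_w(x_n)\subset\Omega$ does transfer essentially verbatim from Theorem \ref{sft1}. The genuine gap is exactly where you locate it, and your proposal does not close it. The telescoped projection inequality $\|x_{n+1}-x_m\|^2\le\|x_n-x_m\|^2-\|x_{n+1}-x_n\|^2$ only yields $\sum_{k=n}^{m-1}\|x_{k+1}-x_k\|^2\le\|x_n-x_m\|^2$, i.e.\ a \emph{lower} bound on $\|x_n-x_m\|$; combined with $\|x_{n+1}-x_n\|\to0$ and the convergence of $\{\|x_n-p\|\}$ for each $p\in\Omega$, the strongest conclusion available is weak convergence of the whole sequence to some point of $\Omega$, not Cauchyness. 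Moreover, even if strong convergence to some $q\in\Omega$ were granted, nothing in your argument identifies $q$ with $P_\Omega x_1$: the composition $P_{C_{n+1}}P_{C_n}\cdots P_{C_2}x_1$ need not converge to $P_{\bigcap_n C_n}x_1$, let alone to $P_\Omega x_1$, so the identification of the limit is a second, independent hole.

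For comparison, the paper's own proof of this theorem disposes of the difficulty by assertion: it claims that from $x_n=P_{C_n}x_{n-1}$, $x_{n+1}=P_{C_{n+1}}x_n$ and $C_{n+1}\subset C_n$ one gets $\|x_n-x_1\|\le\|x_{n+1}-x_1\|$ and $\|x_n-x_1\|\le\|p-x_1\|$ for all $p\in\Omega$, and then applies Lemma \ref{lem3}. Those are precisely the inequalities of the anchored Algorithm 3.2, and, as you correctly observe, they do not follow from the moving-anchor recursion $x_{n+1}=P_{C_{n+1}}x_n$; the paper supplies no justification for them. So your attempt fails at the same step the paper glosses over, but you have at least identified it honestly. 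To produce a complete proof one would either need a genuinely new argument for strong convergence together with an identification of the limit, or a weakened conclusion (e.g.\ convergence to some point of $\Omega$ rather than to $P_\Omega x_1$).
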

\begin{proof}
Similarly, we obtain that $C_n\ (n\geq 1)$ is convex and closed, $P_{C_n}$ is well defined and $\Omega\subset C_n$. By $x_n=P_{C_n}x_{n-1}$, $x_{n+1}=P_{C_{n+1}}x_n$ and $C_{n+1}\subset C_n$, we have $\|x_n-x_1\|\leq\|x_{n+1}-x_1\|$ and $\|x_n-x_1\|\leq\|p-x_1\|,\ \forall p\in \Omega$. Using the proof in Theorems \ref{sft1} and \ref{sft2}, we have that $\{x_n\}$ converges strongly to $x^*=P_{\Omega}x_1$.\qed
\end{proof}
\section{Theoretical applications}\label{sec4}
In this section, we give several interesting special cases of the split variation inclusion problem \eqref{svip}. At the same time, Algorithms 3.1, 3.2 and 3.3 are applied to these problems. Further, the same strong convergence property in Theorems \ref{sft1}, \ref{sft2} and \ref{sft3} are proved.
\subsection{Split variational inequality problem}\label{sec4.1}
Let $C$ and $Q$ be nonempty closed convex subsets of Hilbert spaces $H_1$ and $H_2$, respectively. Let $F:H_1\rightarrow H_1$ and $G:H_2\rightarrow H_2$ be given operators, $A:H_1\rightarrow H_2$ be a bounded linear operator. The split variational inequality problem is to find a point $x^*\in C$ such that
\[
\langle F(x^*), x-x^*\rangle\geq 0,\ \forall x\in C\ \text{and}\ \langle G(Ax^*), y-Ax^*\rangle\geq 0,\ \forall y\in Q.
\]
Especially, when $H_1=H_2$, $F=G$ and $A=I$, the split variational inequality problem is transformed into the classical variational inequality problem which is to find a point $x^*\in C$ such that $\langle F(x^*), x-x^*\rangle\geq 0,\ \forall x\in C$. Hence, the solution set of the variational inequality problem is represented by $VI(F,C)$. Then, the split variational inequality problem is formulated as
\[
\text{find}\ x^*\in C\ \text{such that}\ x^*\in VI(F,C)\ \text{and}\ Ax^*\in VI(G,Q).
\]
To solve the variational inequality problem, the normal cone $N_C(x)$ of $C$ at a point $x\in C$ is defined as follows:
\[
N_C(x)=\{z\in H: \langle z, v-x\rangle\leq 0,\ \forall v\in C\}.
\]
Further, the set valued mapping $S_F$ related to the normal cone $N_C(x)$ is defined by
\[
S_F(x):=\left\{\begin{array}{cc}F(x)+N_{C}(x), & x \in C, \\ \emptyset, & \text { otherwise. }\end{array}\right.
\]
In the sense, if $F$ is a $\alpha$-inverse strongly monotone operator (i.e., for any $x,z\in C$, $\langle F(x)-F(z), x-z\rangle\geq \alpha\|F(x)-F(z)\|^2$), then $S_F$ is a maximal monotone mapping. More importantly, $x\in VI(F, C)$ if and only if $0\in S_F(x)$. Consequently, let $F$ and $G$ be $\alpha$-inverse strongly monotone operators. The set valued mappings $S_F$ and $S_G$ are associated with $F$ and $G$, respectively. In \ref{svip}, when $B_1=S_F$ and $B_2=S_G$, we obtain the above split variational inequality problem.

\subsection{Split saddle point problem}\label{sec4.2}
Let $\mathcal{X}$ and $\mathcal{Y}$ be Hilbert spaces. A bifunction $L:\mathcal{X}\times \mathcal{Y}\rightarrow \mathbb{R}\cup\{-\infty, \infty\}$ is convex-concave if and only if $L(x,\cdot)$ is convex for any $x\in \mathcal{X}$ and $L(\cdot, y)$ is concave for any $y\in \mathcal{Y}$. The operator $T_L$ is defined as follows:
\[
T_L(x,y)=(\partial_1L(x,y), \partial_2(-L)(x,y)),
\]
where $\partial_1$ is the subdifferential of $L$ with respect to $x$ and  $\partial_2$ is the subdifferential of $-L$ with respect to $y$. It is worth noting that $T_L$ is maximal monotone if and only if $L$ is closed and proper, for detail, see, \cite{Rockafellarmonotone}. Naturally, the zeros of $T_L$ coincide with the saddle points of $L$. Therefore, let $\mathcal{X}_i (i=1, 2)$, $\mathcal{Y}_i\ (i=1, 2)$ be Hilbert spaces. Let $A:\mathcal{X}_1\times \mathcal{Y}_1\rightarrow \mathcal{X}_2\times \mathcal{Y}_2$ be a bounded linear operator with adjoint operator $A^*$. Let $L_1$ and $L_2$ be closed proper convex-concave bifunctions. Then, the split saddle point problem is to find a point $(x^*, y^*)\in \mathcal{X}_1\times \mathcal{Y}_1$ such that
\[
(x^*,y^*)\in \operatorname{argminmax}_{(x,y)\in \mathcal{X}_1\times \mathcal{Y}_1}L_1(x,y)\ \text{and}\ A(x^*,y^*)\in \operatorname{argminmax}_{(z,w)\in \mathcal{X}_2\times \mathcal{Y}_2}L_2(z,w).
\]
In other words, when $H_i=\mathcal{X}_i\times \mathcal{Y}_i\ (i=1, 2)$, $B_i=T_{L_i}\ (i=1, 2)$, the split variational inclusion problem is reduced to the split saddle point problem.

\subsection{Split minimization problem}\label{sec4.3}
Let $H_1$ and $H_2$ be Hilbert spaces. Let $\phi:H_1\rightarrow \mathbb{R}$ and $\psi:H_2\rightarrow \mathbb{R}$ be lower semi-continuous convex functions, $A:H_1\rightarrow H_2$ be a bounded linear operator. The split minimization problem is to find $x^*\in H_1$ such that
\[
x^*\in \operatorname{argmin}_{x\in H_1}\phi(x)\ \text{and}\ Ax^*\in \operatorname{argmin}_{y\in H_2}\psi(y).
\]
As we all know, $x^*\in \operatorname{argmin}_{x\in H_1}\phi(x)$ if and only if $0\in \partial \phi(x^*)$, where $\partial \phi$ is the subdifferential of $\phi$ defined by $\partial \phi(x^*):=\left\{\hat{x} \in H_1: \phi(x^*)+\left\langle z-x^*, \hat{x}\right\rangle \leq \phi(z),\ \forall z \in H_1\right\}$.  Recall that the proximal operator $\operatorname{prox}_\phi$ of $\phi$ is as follows:
\[
\operatorname{prox}_\phi x=\operatorname{argmin}_{z\in H_1}\left\{\phi(z)+\frac{1}{2\gamma}\|z-x\|^2\right\},\ \forall \gamma>0.
\]
It is very important that $\operatorname{prox}_{\phi}(x)=(I+\gamma \partial \phi)^{-1}(x)=J_{\gamma}^{\partial \phi}(x)$. In addition, $\partial \phi$ is a maximal monotone mapping and $\operatorname{prox}_{\phi}$ is a firmly nonexpansive mapping. In view of this, when $B_1=\partial \phi$ and $B_2=\partial \psi$ in \eqref{svip}, the split variational inclusion problem is transformed into the split minimization problem.

\begin{remark}
Through the above results, the split variational inclusion problem is transformed into other problems, such as the split variational inequality problem, the split saddle point problem and the split minimization problem. Using the same algorithms and techniques in Theorems \ref{sft1}, \ref{sft2} and \ref{sft3}, the strong convergence property of these problems are obtained under the above corresponding conditions in Subsections \ref{sec4.1}, \ref{sec4.2} and \ref{sec4.3}.
\end{remark}

\section{Numerical example}\label{sec5}
In this section, a numerical example is provided to illustrate the effectiveness and realization of convergence behavior of Algorithms 3.1, 3.2 and 3.3. All codes were written in Matlab R2018b, and ran on a Lenovo ideapad 720S with 1.6 GHz Intel Core i5 processor and 8GB of RAM. Our results compare the existing conclusions below. Firstly, let $H_1$ and $H_2$ be Hilbert spaces, $A:H_1\rightarrow H_2$ be a bounded linear operator with the adjoint operator $A^*$. Let $B_1: H_1\rightarrow 2^{H_1}$ and $B_2: H_2\rightarrow 2^{H_2}$ be two set-valued maximal monotone mappings. Many existing conclusions on the split variational inclusion problem have been proven in such an environment as follows.

\begin{theorem} \emph{(Byrne et al. \cite[Algorithm 4.4]{byrne2011weak})}
For any initial point $x_1\in H_1$, $\delta_n\in (0,1)$ and $\beta>0$, the iterative sequence $\{x_n\}$ is generated by the following iterative scheme
\[
x_{n+1}=\delta_n x_1+(1-\delta_n)J_{\beta}^{B_1}\left(x_n-\gamma A^*(I-J_{\beta}^{B_2})Ax_n\right),\ n\geq 1.
\]
If the sequence $\{\delta_n\}$ satisfies $\underset{n\rightarrow \infty}{\lim} \delta_n=0$ and $\sum_{n=1}^{\infty}\delta_n= \infty$, $0<\gamma<2/\|A^*A\|$, then the iterative sequence $\{x_n\}$ converges strongly to a point $x^*\in \Omega$.
\end{theorem}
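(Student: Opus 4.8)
The plan is to recognize the recursion as a Halpern iteration driven by a single nonexpansive operator and then invoke the classical strong-convergence theory for such iterations. Define
\[
T := J_\beta^{B_1}\bigl(I-\gamma A^*(I-J_\beta^{B_2})A\bigr),
\]
so that the scheme reads $x_{n+1}=\delta_n x_1+(1-\delta_n)Tx_n$. First I would verify that $T$ is nonexpansive and that $Fix(T)=\Omega$. Setting $Fx:=A^*(I-J_\beta^{B_2})Ax$ and using, exactly as in the computation of Lemma \ref{lem1}, that $I-J_\beta^{B_2}$ is firmly nonexpansive, one checks that $F$ is inverse-strongly monotone with constant $1/\|A^*A\|$; hence $I-\gamma F$ is nonexpansive whenever $0<\gamma<2/\|A^*A\|$, and composing with the firmly nonexpansive resolvent $J_\beta^{B_1}$ keeps $T$ nonexpansive. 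The inclusion $\Omega\subseteq Fix(T)$ is immediate, and for the reverse inclusion I would reuse the inequality of Lemma \ref{lem1} with $z_n$ replaced by a fixed point $x=Tx$: it forces $(2-\gamma\|A^*A\|)\|(I-J_\beta^{B_2})Ax\|^2\le 0$, whence $(I-J_\beta^{B_2})Ax=0$, then $Fx=0$ and $x=J_\beta^{B_1}x$, i.e. $x\in\Omega$.

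Second, I would establish boundedness. Fixing $p\in\Omega=Fix(T)$, the convexity estimate
\[
\|x_{n+1}-p\|\le \delta_n\|x_1-p\|+(1-\delta_n)\|x_n-p\|
\]
yields by induction $\|x_n-p\|\le\|x_1-p\|$ for all $n$, so $\{x_n\}$ and hence $\{Tx_n\}$ are bounded. Writing $x^*:=P_\Omega x_1$, the goal becomes $\|x_n-x^*\|\to 0$.

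Third, I would run the standard Halpern argument under the hypotheses $\delta_n\to 0$ and $\sum_n\delta_n=\infty$. Squaring the recursion and using nonexpansiveness of $T$ together with $(1-\delta_n)^2\le 1-\delta_n$ gives
\[
\|x_{n+1}-x^*\|^2\le(1-\delta_n)\|x_n-x^*\|^2+\delta_n\,t_n,\qquad t_n:=2\langle x_1-x^*,\,x_{n+1}-x^*\rangle,
\]
and strong convergence follows from Xu's recursion lemma once $\limsup_n t_n\le 0$ is shown. To obtain the latter I would pass to a subsequence attaining the $\limsup$, extract a weakly convergent sub-subsequence $x_{n_k}\rightharpoonup q$, use $\delta_n\to 0$ and the boundedness to deduce $\|x_{n_k}-Tx_{n_k}\|\to 0$, and apply the demiclosedness of $I-T$ at zero (Lemma \ref{lem2}, via Remark \ref{remark21} and the fixed-point characterization above) to conclude $q\in Fix(T)=\Omega$; the projection inequality $\langle x_1-x^*,\,q-x^*\rangle\le 0$ then delivers $\limsup_n t_n\le 0$.

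The main obstacle is precisely the step producing $\|x_n-Tx_n\|\to 0$ along the relevant subsequence, and hence $\limsup_n t_n\le 0$, from only $\delta_n\to 0$ and $\sum_n\delta_n=\infty$, without any extra regularity condition on $\{\delta_n\}$: the naive route through asymptotic regularity $\|x_{n+1}-x_n\|\to 0$ is unavailable under these hypotheses alone. I would therefore close the argument by Maingé's two-case analysis of the scalar sequence $\|x_n-x^*\|^2$, splitting into the eventually-monotone case and the case governed by the index map $\tau(n)=\max\{k\le n:\|x_k-x^*\|\le\|x_{k+1}-x^*\|\}$, which supplies the subsequence needed to invoke demiclosedness and forces $\|x_n-x^*\|\to 0$ in both cases.
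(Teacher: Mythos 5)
This theorem is not proved in the paper at all: it is quoted verbatim from Byrne et al.\ \cite{byrne2011weak} in Section \ref{sec5} purely as a comparison algorithm for the numerical experiments, so there is no in-paper argument to measure your proposal against. Judged on its own, your plan is the correct and standard one, and it is essentially the route Byrne et al.\ themselves take: package the iteration as a Halpern scheme for $T=J_\beta^{B_1}(I-\gamma A^*(I-J_\beta^{B_2})A)$, show $T$ is nonexpansive (indeed averaged, since $A^*(I-J_\beta^{B_2})A$ is $\tfrac{1}{\|A^*A\|}$-inverse strongly monotone and $\gamma<2/\|A^*A\|$) with $Fix(T)=\Omega$, and then run the Halpern convergence machinery. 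Your identification of the real obstacle is accurate: under only $\delta_n\to 0$ and $\sum_n\delta_n=\infty$ one cannot get asymptotic regularity of the whole sequence by the classical telescoping, and Maing\'e's two-case analysis is a legitimate way to close that gap (the original Byrne et al.\ proof instead imposes, or implicitly uses, the additional regularity of $\{\delta_n\}$ needed for $\|x_{n+1}-x_n\|\to 0$; your version is actually the cleaner one under the hypotheses as literally stated).

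One detail you should make explicit, because without it the Maing\'e step does not go through: to deduce $\|x_{\tau(n)}-Tx_{\tau(n)}\|\to 0$ from $\|x_{\tau(n)}-x^*\|\le\|x_{\tau(n)+1}-x^*\|$ you need a quantitative decrease of the form
\[
\|Tx_n-x^*\|^2\le\|x_n-x^*\|^2-c\,\|x_n-Tx_n\|^2,\qquad c>0,
\]
i.e.\ the averagedness of $T$, not mere nonexpansiveness; plain nonexpansiveness leaves no residual term to squeeze. You have all the ingredients for this (the inverse strong monotonicity of $F$ and $\gamma<2/\|A^*A\|$ make $I-\gamma F$ averaged, and its composition with the firmly nonexpansive $J_\beta^{B_1}$ is averaged), but the proposal only invokes nonexpansiveness at that point. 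With that inequality inserted, both the eventually-monotone case and the $\tau(n)$ case yield $\|x_n-Tx_n\|\to 0$ along the needed indices, demiclosedness of $I-T$ (Lemma \ref{lem2}) gives $\omega_w\subset\Omega$, and Xu's lemma finishes the argument. Everything else --- the inverse strong monotonicity computation (which indeed mirrors Lemma \ref{lem1}), the identification $Fix(T)=\Omega$ using $\Omega\neq\emptyset$, and the boundedness induction --- is correct as written.
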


\begin{theorem} \emph{(Long et al. \cite[Algorithm (49)]{long2019new})}
For any initial points $x_0, x_1\in H_1$ and $\beta>0$, the iterative sequence $\{x_n\}$ is generated by the following iterative scheme.
\[
\left\{\begin{aligned}
&{z_{n}=x_{n}+\alpha_{n}(x_{n}-x_{n-1}),}\\
&{u_n=J_{\beta}^{B_1}\left(z_n-\gamma_nA^*(I-J_{\beta}^{B_2})Az_n\right),}\\
&{x_{n+1}=\delta_nf(x_n)+(1-\delta_n)u_n, n\geq 1,}
\end{aligned}\right.
\]
where $f:H_1\rightarrow H_1$ is a contraction mapping with coefficient $k\in[0,1)$, $\{\delta_n\}$ is a sequence in $(0,1)$ such that $\underset{n\rightarrow \infty}{\lim} \delta_n=0$ and $\sum_{n=1}^{\infty}\delta_n= \infty$, $0<a\leq \gamma_n\leq b<1/\|A\|^2$, $0\leq \alpha_n\leq \alpha$ and $\underset{n\rightarrow \infty}{\lim} \frac{\alpha_n\|x_n-x_{n-1}\|}{\delta_n}=0$. The iterative sequence $\{x_n\}$ converges strongly to a point $x^*\in \Omega$.
\end{theorem}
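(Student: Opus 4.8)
The plan is to treat Algorithm (49) as a viscosity (Moudafi) approximation in which the inertial step $z_n=x_n+\alpha_n(x_n-x_{n-1})$ acts as a perturbed anchor. First I would pin down the intended limit: since $P_\Omega\circ f$ is a contraction with constant $k$ on the nonempty closed convex set $\Omega$, Banach's fixed point theorem gives a unique $x^*=P_\Omega f(x^*)$, equivalently characterized by $\langle (I-f)x^*,\,p-x^*\rangle\ge 0$ for all $p\in\Omega$; this $x^*$ is the asserted limit. Next, with $\beta$ fixed and $0<a\le\gamma_n\le b<1/\|A\|^2$, I would sharpen Lemma~\ref{lem1}. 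Writing $v_n=z_n-\gamma_nA^*(I-J_\beta^{B_2})Az_n$ and using that $J_\beta^{B_1}$ is firmly nonexpansive (so $\|u_n-x^*\|^2\le\|v_n-x^*\|^2-\|v_n-u_n\|^2$) together with $\|A^*(I-J_\beta^{B_2})Az_n\|^2\le\|A\|^2\|(I-J_\beta^{B_2})Az_n\|^2$, I obtain for every $p\in\Omega$
\[
\|u_n-p\|^2\le\|z_n-p\|^2-\gamma_n(2-\gamma_n\|A\|^2)\|(I-J_\beta^{B_2})Az_n\|^2-\|v_n-u_n\|^2 .
\]
Both subtracted terms are nonnegative (the first because $b\|A\|^2<1$), and in particular $\|u_n-p\|\le\|z_n-p\|$.

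The hypothesis $\lim_n \alpha_n\|x_n-x_{n-1}\|/\delta_n=0$ is the crucial inertial--viscosity coupling: setting $\epsilon_n:=\alpha_n\|x_n-x_{n-1}\|/\delta_n\to 0$ gives $\|z_n-p\|\le\|x_n-p\|+\delta_n\epsilon_n$, so the inertial perturbation is $o(\delta_n)$. Inserting $\|u_n-x^*\|\le\|z_n-x^*\|$ and $\|f(x_n)-x^*\|\le k\|x_n-x^*\|+\|f(x^*)-x^*\|$ into $\|x_{n+1}-x^*\|\le\delta_n\|f(x_n)-x^*\|+(1-\delta_n)\|u_n-x^*\|$ yields a recursion of the form $\|x_{n+1}-x^*\|\le(1-\delta_n(1-k))\|x_n-x^*\|+\delta_nC$ with $C$ bounded, whence boundedness of $\{x_n\}$ (hence of $\{z_n\},\{u_n\},\{f(x_n)\}$) follows by induction.

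For the strong convergence I would establish a quadratic estimate. Expanding $x_{n+1}-x^*=(1-\delta_n)(u_n-x^*)+\delta_n(f(x_n)-x^*)$, bounding the cross term by Cauchy--Schwarz and Young, and inserting the refined descent gives
\[
\|x_{n+1}-x^*\|^2\le(1-\delta_n(1-k))\|x_n-x^*\|^2-(1-\delta_n)\big[\gamma_n(2-\gamma_n\|A\|^2)\|(I-J_\beta^{B_2})Az_n\|^2+\|v_n-u_n\|^2\big]+\delta_n b_n,
\]
where $b_n=2\langle f(x^*)-x^*,\,x_{n+1}-x^*\rangle+o(1)$ absorbs the contraction cross term and the $o(\delta_n)/\delta_n$ inertial remainder. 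Because the inertial step can destroy monotonicity of $\{\|x_n-x^*\|^2\}$, I would not finish with Xu's numerical lemma directly but run Maing\'e's two--case argument on $\Gamma_n:=\|x_n-x^*\|^2$. In the branch where $\{\Gamma_n\}$ is eventually nonincreasing it converges, and the telescoped bracket forces $\|(I-J_\beta^{B_2})Az_n\|\to 0$ and $\|v_n-u_n\|\to 0$; since $\gamma_nA^*(I-J_\beta^{B_2})Az_n\to 0$ this also yields $\|z_n-u_n\|\to 0$, and together with $\|z_n-x_n\|=\alpha_n\|x_n-x_{n-1}\|\to 0$ the sequences $\{x_n\},\{z_n\},\{u_n\},\{x_{n+1}\}$ share the same weak cluster points. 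For any such $q$ (along $z_{n_l}\rightharpoonup q$), boundedness of $A$ gives $Az_{n_l}\rightharpoonup Aq$, and demiclosedness of $I-J_\beta^{B_2}$ (Lemma~\ref{lem2} with Remark~\ref{remark21}) gives $Aq\in B_2^{-1}(0)$; from $\tfrac{v_n-u_n}{\beta}\in B_1(u_n)$, $\tfrac{v_n-u_n}{\beta}\to 0$ and $u_{n_l}\rightharpoonup q$, maximal monotonicity of $B_1$ gives $0\in B_1(q)$, so $q\in\Omega$. The variational characterization of $x^*$ then yields $\limsup_n\langle f(x^*)-x^*,\,x_{n+1}-x^*\rangle\le 0$, hence $\limsup b_n\le 0$, and Xu's lemma applied to $\Gamma_{n+1}\le(1-\delta_n(1-k))\Gamma_n+\delta_n(1-k)\tilde b_n$ forces $\Gamma_n\to 0$. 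In the non-monotone branch I would run Maing\'e's index selection $\tau(n)$ and reach the same conclusion.

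The main obstacle is precisely the loss of monotonicity caused by the inertial extrapolation: it rules out a one-line Xu-lemma finish and forces the two-case structure, and it is exactly why the quantitative coupling $\alpha_n\|x_n-x_{n-1}\|=o(\delta_n)$ is indispensable---without it the perturbation $b_n$ need not satisfy $\limsup b_n\le 0$. A secondary subtlety is that, in the viscosity setting, establishing $\omega_w(x_n)\subset\Omega$ on the $B_1$ side requires $\|z_n-u_n\|\to 0$, which is why I retain the firmly-nonexpansive defect term $\|v_n-u_n\|^2$ in the refined descent rather than discarding it as in the statement of Lemma~\ref{lem1}.
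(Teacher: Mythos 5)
A preliminary but important observation: the paper does not prove this statement at all. It appears in Section~\ref{sec5} purely as an imported benchmark, quoted from Long et al.\ \cite{long2019new} so that Algorithms 3.1--3.3 can be compared numerically against it; there is no in-paper proof to measure your attempt against, so your proposal can only be judged on its own merits and against the standard argument in the cited source.

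On those terms, your outline is correct and follows essentially the route one expects (and that Long et al.\ themselves take): anchor point $x^*=P_\Omega f(x^*)$ via Banach's theorem with its variational characterization; boundedness from the recursion $\|x_{n+1}-x^*\|\le(1-\delta_n(1-k))\|x_n-x^*\|+\delta_n C$, which works precisely because the coupling $\alpha_n\|x_n-x_{n-1}\|=o(\delta_n)$ makes the inertial perturbation absorbable; a quadratic estimate; the subsequential $\limsup$ step using demiclosedness; and Xu's lemma within a Maing\'e two-case structure. Your refined descent inequality, which keeps the firmly-nonexpansive defect term $\|v_n-u_n\|^2$ rather than discarding it as in Lemma~\ref{lem1}, is a genuinely necessary strengthening in the viscosity setting: in the paper's projection proofs $\|u_n-z_n\|\to 0$ falls out of the projection structure ($\|u_n-x_{n+1}\|\le\|z_n-x_{n+1}\|$), which is unavailable here, so without the retained defect term you could not reach $0\in B_1(q)$; your handling of the $B_1$ side via $\frac{1}{\beta}(v_n-u_n)\in B_1(u_n)$ and weak--strong closedness of the graph of a maximal monotone operator is also correct (an equivalent alternative is demiclosedness of $I-J_\beta^{B_1}$ once $\|z_n-u_n\|\to 0$ is in hand). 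The only places where you sketch rather than execute are routine: the explicit cross-term expansion giving the coefficient $(1-\delta_n(1-k))$, and the non-monotone branch with Maing\'e's index $\tau(n)$ (where one must use $\Gamma_{\tau(n)}\le\Gamma_{\tau(n)+1}$ both to kill the bracket along $\tau(n)$ and then to conclude $\Gamma_{\tau(n)}\to 0$ and $\Gamma_n\le\Gamma_{\tau(n)+1}\to 0$). These are standard and fillable; I see no genuine gap.
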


\begin{theorem} \emph{(Anh et al. \cite[Algorithm (4)]{anh2020astrong})}
For any initial points $x_0, x_1\in H_1$ and $\beta>0$, the iterative sequence $\{x_n\}$ is generated by the following iterative scheme.
\[
\left\{\begin{aligned}
&{z_{n}=x_{n}+\alpha_{n}(x_{n}-x_{n-1}),}\\
&{u_n=J_{\beta}^{B_1}\left(z_n-\gamma_nA^*(I-J_{\beta}^{B_2})Az_n\right),}\\
&{x_{n+1}=(1-\delta_n-\theta_n)x_n+\delta_nu_n, n\geq 1,}
\end{aligned}\right.
\]
where $\{\theta_n\}$ is a sequence in $(0,1)$ with $\underset{n\rightarrow \infty}{\lim} \theta_n=0$, $\sum_{n=1}^{\infty}\theta_n= \infty$, $0<a\leq \gamma_n\leq b<1/\|A\|^2$, $0\leq \alpha_n< \alpha$ for some $\alpha>0$ with $\underset{n\rightarrow \infty}{\lim} \frac{\alpha_n\|x_n-x_{n-1}\|}{\theta_n}=0$, $0<c<\delta_n<d<1-\theta_n$. The iterative sequence $\{x_n\}$ converges strongly to a point $x^*\in \Omega$.
\end{theorem}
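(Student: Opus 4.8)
The plan is to read Algorithm (4) as a Mann--Halpern hybrid anchored at the origin: since the coefficients of $x_n$ and $u_n$ sum to $1-\theta_n$, we may write $x_{n+1}=\theta_n\cdot 0+(1-\delta_n-\theta_n)x_n+\delta_n u_n$, so the natural candidate limit is $x^*=P_\Omega 0$. With $u_n=J_\beta^{B_1}\bigl(z_n-\gamma_n A^*(I-J_\beta^{B_2})Az_n\bigr)$, I would first record, exactly as in Lemma \ref{lem1} with $\beta_n\equiv\beta$, the descent estimate
\[
\|u_n-p\|^2\le\|z_n-p\|^2-\Theta_n,\qquad \Theta_n:=\gamma_n\bigl(2\|(I-J_\beta^{B_2})Az_n\|^2-\gamma_n\|A^*(I-J_\beta^{B_2})Az_n\|^2\bigr),
\]
for every $p\in\Omega$. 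Because $\|A^*\|=\|A\|$ and $a\le\gamma_n\le b<1/\|A\|^2$, the bracket is bounded below by $a(2-b\|A\|^2)\|(I-J_\beta^{B_2})Az_n\|^2$ with $a(2-b\|A\|^2)>0$; hence $\Theta_n\ge 0$, so $\|u_n-p\|\le\|z_n-p\|$, and — crucially for later — any proof that $\Theta_n\to0$ will force $\|(I-J_\beta^{B_2})Az_n\|\to0$.

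Next I would establish boundedness. From $z_n=x_n+\alpha_n(x_n-x_{n-1})$ we get $\|z_n-p\|\le\|x_n-p\|+e_n$ with $e_n:=\alpha_n\|x_n-x_{n-1}\|$, and the hypothesis $\lim_n e_n/\theta_n=0$ lets me write $e_n=\theta_n\varepsilon_n$ with $\varepsilon_n\to0$. Taking norms in the recursion and using $\|u_n-p\|\le\|z_n-p\|$ gives $\|x_{n+1}-p\|\le(1-\theta_n)\|x_n-p\|+\theta_n(\|p\|+\varepsilon_n')$ for a suitable null sequence $\varepsilon_n'$, and an elementary induction yields boundedness of $\{x_n\}$, hence of $\{z_n\}$ and $\{u_n\}$, together with $\|z_n-x_n\|=e_n\to0$.

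The core is a quadratic recursion. Decomposing $x_{n+1}=(1-\theta_n)w_n+\theta_n\cdot 0$ with $w_n=\lambda_n x_n+(1-\lambda_n)u_n$ and $\lambda_n=\tfrac{1-\delta_n-\theta_n}{1-\theta_n}$, I would use the Halpern identity
\[
\|x_{n+1}-x^*\|^2\le(1-\theta_n)\|w_n-x^*\|^2+2\theta_n\langle -x^*,\,x_{n+1}-x^*\rangle
\]
together with the convexity identity $\|w_n-x^*\|^2=\lambda_n\|x_n-x^*\|^2+(1-\lambda_n)\|u_n-x^*\|^2-\lambda_n(1-\lambda_n)\|x_n-u_n\|^2$ and the descent estimate for $\|u_n-x^*\|^2$. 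Note that $0<c<\delta_n<d<1-\theta_n$ keeps $\lambda_n(1-\lambda_n)$ bounded below by a positive constant. Collecting terms yields
\[
\|x_{n+1}-x^*\|^2\le(1-\theta_n)\|x_n-x^*\|^2-\eta_n+\theta_n\,\xi_n,
\]
where $\eta_n\ge0$ aggregates the surpluses $(1-\lambda_n)\Theta_n$ and $\lambda_n(1-\lambda_n)\|x_n-u_n\|^2$, while $\xi_n=O(e_n/\theta_n)+2\langle -x^*,x_{n+1}-x^*\rangle$; the remaining task is $\limsup_n\langle -x^*,x_{n+1}-x^*\rangle\le0$.

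This last point is the main difficulty, and I would handle it by a two-case (Maingé-type) analysis to avoid assuming monotonicity of $\{\|x_n-x^*\|^2\}$. Along any subsequence on which $\|x_n-x^*\|^2$ does not eventually decrease, the recursion forces $\eta_n\to0$, hence $\Theta_n\to0$ and $\|x_n-u_n\|\to0$; then $\|u_n-J_\beta^{B_1}z_n\|\le\gamma_n\|A\|\,\|(I-J_\beta^{B_2})Az_n\|\to0$ combined with $\|z_n-x_n\|\to0$ gives both $\|z_n-J_\beta^{B_1}z_n\|\to0$ and $\|(I-J_\beta^{B_2})Az_n\|\to0$. Since $J_\beta^{B_1},J_\beta^{B_2}$ are firmly nonexpansive, Remark \ref{remark21} and the demiclosedness Lemma \ref{lem2} place every weak cluster point $q$ of $\{z_n\}$ in $Fix(J_\beta^{B_1})$ with $Aq\in Fix(J_\beta^{B_2})$, i.e.\ $q\in\Omega$; the characterization $\langle 0-x^*,q-x^*\rangle\le0$ of $x^*=P_\Omega 0$ then gives $\limsup_n\langle -x^*,x_{n+1}-x^*\rangle\le0$. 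Feeding this into the recursion and applying the standard real-sequence lemma (using $\sum\theta_n=\infty$, $\theta_n\to0$, $e_n/\theta_n\to0$) closes both cases and yields $\|x_n-x^*\|\to0$. The hard part throughout is the interplay between the inertial perturbation $e_n$ and the anchoring scale $\theta_n$: every error must be shown to be $\theta_n\cdot o(1)$ so that it is absorbed against $\sum\theta_n=\infty$, and the asymptotic regularity $\Theta_n\to0$ must be extracted purely from the recursion rather than, as in the projection algorithms, from the geometry of the sets $C_n$.
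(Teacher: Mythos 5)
You should know at the outset that this paper never proves the statement: it is quoted verbatim from Anh et al.\ \cite{anh2020astrong} in Section~\ref{sec5} purely as a benchmark for the numerical comparison, so there is no in-paper proof to measure you against --- the relevant comparison is with the original source, and your sketch is in substance the standard (and, up to packaging, the original) argument for such Mann--Halpern schemes. Your reading of $x_{n+1}=(1-\delta_n-\theta_n)x_n+\delta_n u_n$ as anchored at the origin, $x_{n+1}=\theta_n\cdot 0+(1-\theta_n)w_n$ with $w_n=\lambda_n x_n+(1-\lambda_n)u_n$, $\lambda_n=\frac{1-\delta_n-\theta_n}{1-\theta_n}$, is exactly right and even sharpens the statement as printed here: the limit is the minimum-norm solution $x^*=P_\Omega 0$, which the paper's version leaves as an unspecified ``$x^*\in\Omega$''. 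The three pillars are all sound: the descent estimate with $\Theta_n\geq a(2-b\|A\|^2)\|(I-J_\beta^{B_2})Az_n\|^2$ is precisely Lemma~\ref{lem1} specialized to $\beta_n\equiv\beta$ (the fixed-stepsize regime $\gamma_n\le b<1/\|A\|^2$ makes it coercive, which is what the self-adaptive stepsize (P2) achieves without knowing $\|A\|$ in the paper's own algorithms); the absorption of the inertial term via $e_n=\alpha_n\|x_n-x_{n-1}\|=\theta_n\varepsilon_n$ is the standard use of the hypothesis $\lim_n e_n/\theta_n=0$; and the Maing\'e two-case analysis correctly replaces the monotonicity that the paper's hybrid/shrinking algorithms get for free from the projection sets $C_n$, $Q_n$ --- your closing remark that $\Theta_n\to 0$ must be extracted from the recursion rather than from the geometry of $C_n$ is exactly the structural difference between this theorem and Theorems~\ref{sft1}--\ref{sft3}.

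Two small points to tighten if you write this out in full. First, $0<c<\delta_n<d<1-\theta_n$ by itself only gives $\lambda_n>0$ at each $n$; the uniform lower bound $\lambda_n(1-\lambda_n)\geq\epsilon>0$ needs $\theta_n\to 0$, e.g.\ $\lambda_n\geq\frac{1-d}{1+d}$ for all large $n$, which suffices since the argument is asymptotic. Second, in Case~2 you should make the Maing\'e index $\tau(n)=\max\{k\le n:\|x_k-x^*\|<\|x_{k+1}-x^*\|\}$ explicit: the inequality $\|x_{\tau(n)}-x^*\|\leq\|x_{\tau(n)+1}-x^*\|$ fed into your recursion gives $\eta_{\tau(n)}\leq\theta_{\tau(n)}(\xi_{\tau(n)}-\|x_{\tau(n)}-x^*\|^2)\to 0$, after which demiclosedness (Remark~\ref{remark21}, Lemma~\ref{lem2}) and the variational characterization of $P_\Omega 0$ run as you describe, and one concludes $\|x_n-x^*\|\leq\|x_{\tau(n)+1}-x^*\|\to 0$; you compress this into ``the recursion forces $\eta_n\to0$ along such subsequences,'' which is the right idea but skips the bridge from the subsequence back to the full sequence. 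Neither point is a gap in conception --- the proposal is correct.
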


\begin{example}\label{ex1}
Assume that $A, A_{1}, A_{2}: \mathbb{R}^{m} \rightarrow \mathbb{R}^{m}$ are created from a normal distribution with mean zero and unit variance. Let $B_{1}:\mathbb{R}^{m} \rightarrow \mathbb{R}^{m}$ and $B_{2}: \mathbb{R}^{m} \rightarrow \mathbb{R}^{m}$ be defined by $B_{1}(x)=A_{1}^{*} A_{1} x$ and $B_{2}(y)=A_{2}^{*} A_{2} y$, respectively. Consider the problem of finding a point $\bar{x}=\left(\bar{x}_{1}, \ldots, \bar{x}_{m}\right)^{T} \in \mathbb{R}^{m}$ such that $B_{1}(\bar{x})=(0, \ldots, 0)^{T}$ and $B_{2}(A \bar{x})=(0, \ldots, 0)^{T}$. It is easy to see that the minimum norm solution of the mentioned above problem is $x^{*}=(0, \ldots, 0)^{T}$.  Our parameter settings are as follows. In our algorithms~3.1--3.3, set $ \alpha_{n}=0.5 $, $ \beta_{n}=1 $ and $ \sigma_{n}=1.5 $. Take $ \beta=1 $, $ \delta_{n}=\frac{1}{n+1} $ and $ \gamma_{n}=\frac{1.5}{\|A^{*}A\|} $ in the Algorithm~4.4 proposed by Byrne et al.~\cite{byrne2011weak}. Put $ \beta=1 $, $ \delta_{n}=\frac{1}{n+1} $, $ \alpha=0.5 $, $ \alpha_{n}=\frac{1}{(n+1)^{3}\|x_{n}-x_{n-1}\|} $, $ \gamma_{n}=\frac{0.5}{\|A^{*}A\|} $ and $ f(x)=0.8x $ in Long et al. \cite[Algorithm (49)]{long2019new}. In Anh et al. \cite[Algorithm (4)]{anh2020astrong}, choose $ \beta=1 $, $ \theta_{n}=\frac{1}{n+1} $, $ \delta_{n}=0.2(1-\theta_{n}) $, $ \alpha=0.5 $, $ \alpha_{n}=\frac{1}{(n+1)^{3}\|x_{n}-x_{n-1}\|} $ and $ \gamma_{n}=\frac{0.5}{\|A^{*}A\|} $. We use $ E_{n}=\left\|x_{n}-x^{*}\right\| $ to measure the iteration error of all algorithms. The stopping condition is $ E_{n}<\epsilon $ or the maximum number of iterations is $ 300 $ times. First, choose $ \epsilon=10^{-2},10^{-3},10^{-4},10^{-5} $. We test the convergence behavior of all algorithms under different stopping conditions. The numerical results are shown in Table~\ref{tab1} and Figure~\ref{fig1-4}. Second, Figure~\ref{fig5-8} describes the numerical behavior of all algorithms in different dimensions under the same stopping criterion $ \epsilon=10^{-4} $.
\begin{table}[H]
	\centering
	\caption{The number of termination iterations of all algorithms under different stopping criteria}
	\begin{tabular}{ccccc}
		\toprule
		\multirow{2}[2]{*}{The Algotithms} & $\epsilon= 10^{-2} $ & $  \epsilon= 10^{-3} $ & $\epsilon=  10^{-4} $ & $ \epsilon=10^{-5} $ \\
		\cmidrule{2-5}          & iter. & iter. & iter. & iter. \\
		\midrule
		Our Alg. 3.3 & \textbf{13}    & \textbf{18}    & \textbf{23}    & \textbf{28} \\
		Our Alg. 3.2 & 248   & 300   & 300   & 300 \\
		Our Alg. 3.1 & 300   & 300   & 300   & 300 \\
		Byrne et al. Alg. 4.4 & 300   & 300   & 300   & 300 \\
		Long et al. Alg. (49) & 26    & 38    & 48    & 54 \\
		Anh et al. Alg. (4) & 37    & 80    & 131   & 185 \\
		\bottomrule
	\end{tabular}%
	\label{tab1}%
\end{table}%

\begin{figure}[H]
\centering
\subfigure[$ \epsilon=10^{-2} $]{
\includegraphics[width=0.45\textwidth]{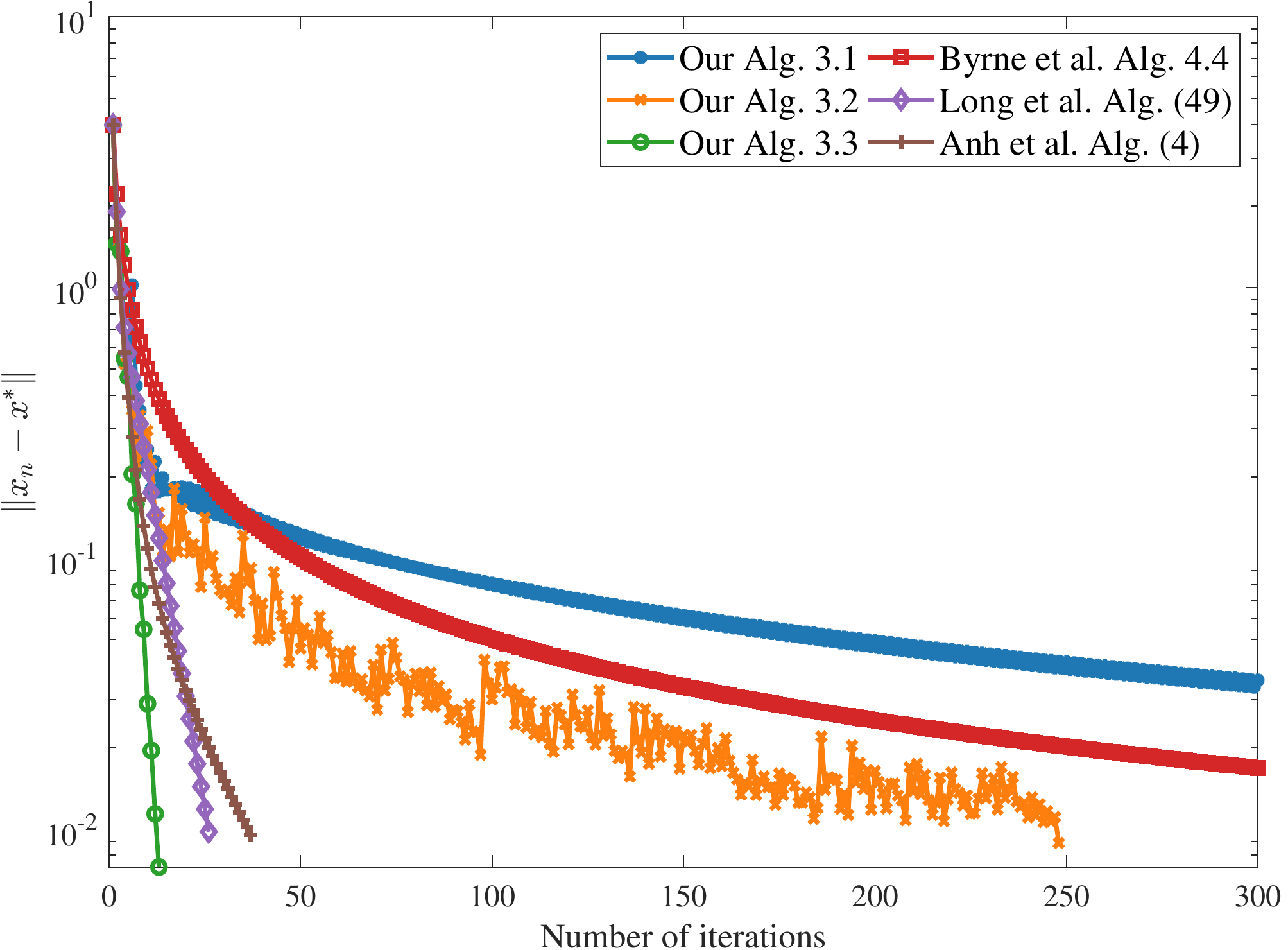}}
\subfigure[$ \epsilon=10^{-3} $]{
\includegraphics[width=0.45\textwidth]{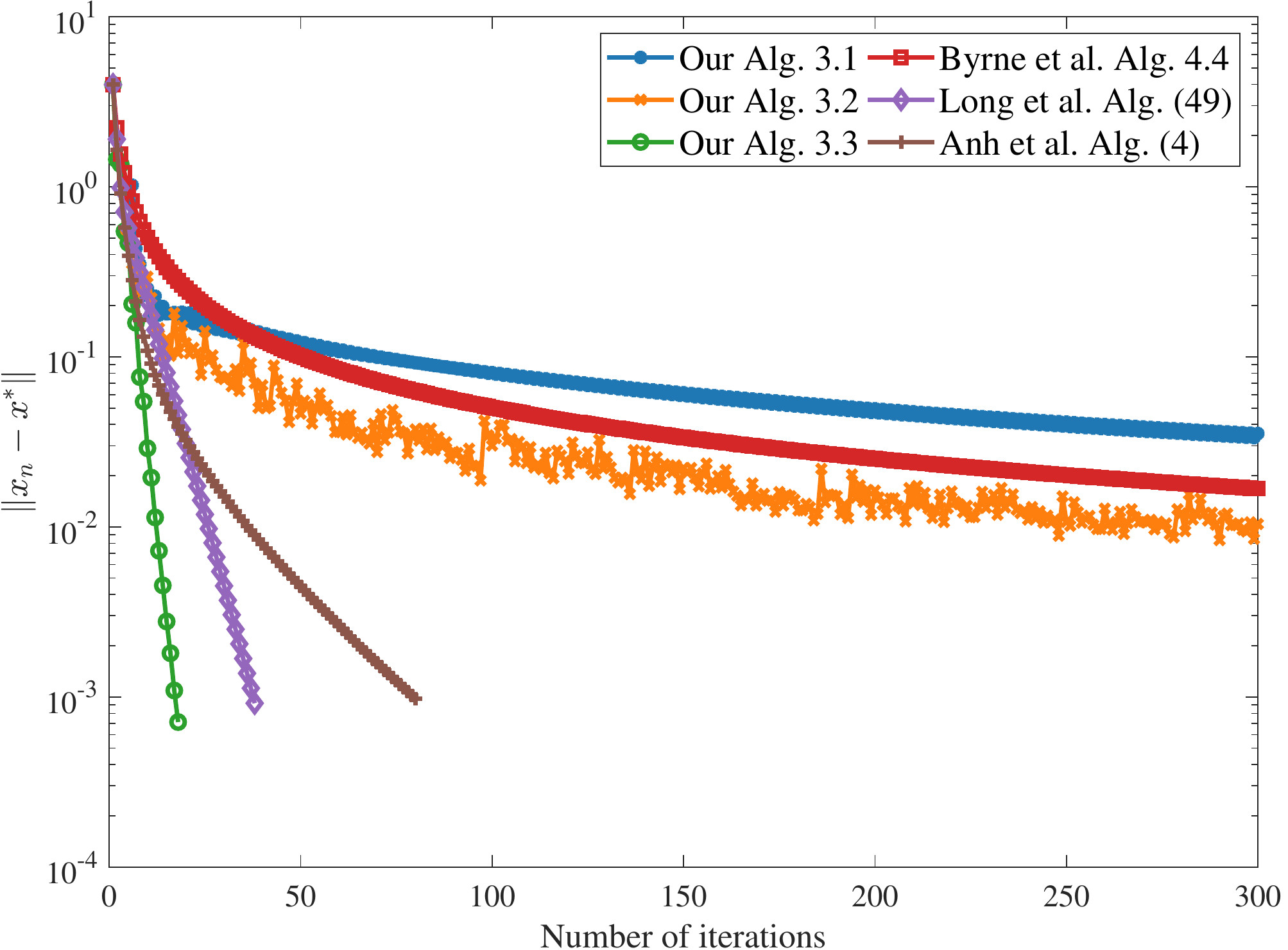}}
\subfigure[$ \epsilon=10^{-4} $]{
\includegraphics[width=0.45\textwidth]{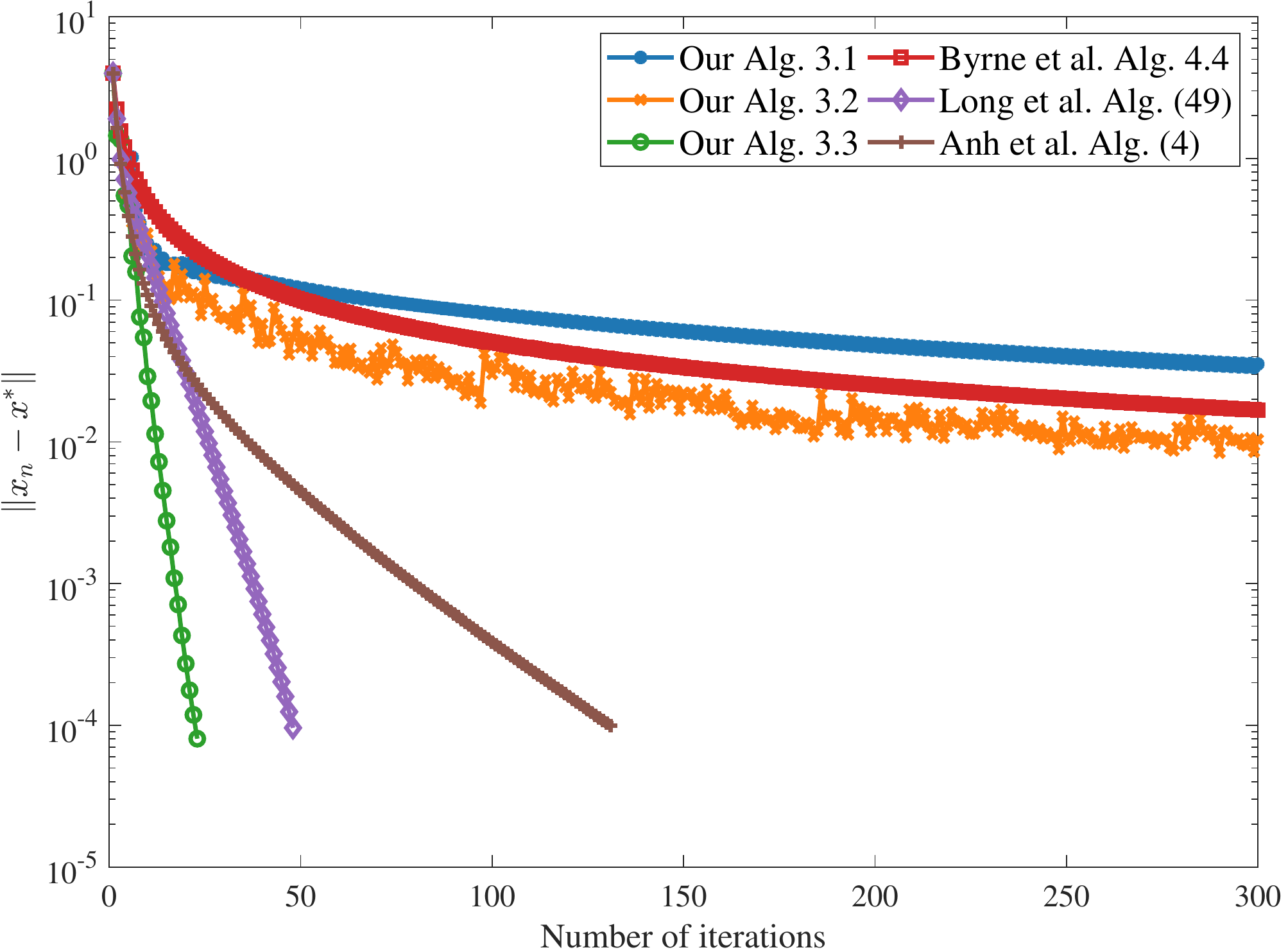}}
\subfigure[$ \epsilon=10^{-5} $]{
\includegraphics[width=0.45\textwidth]{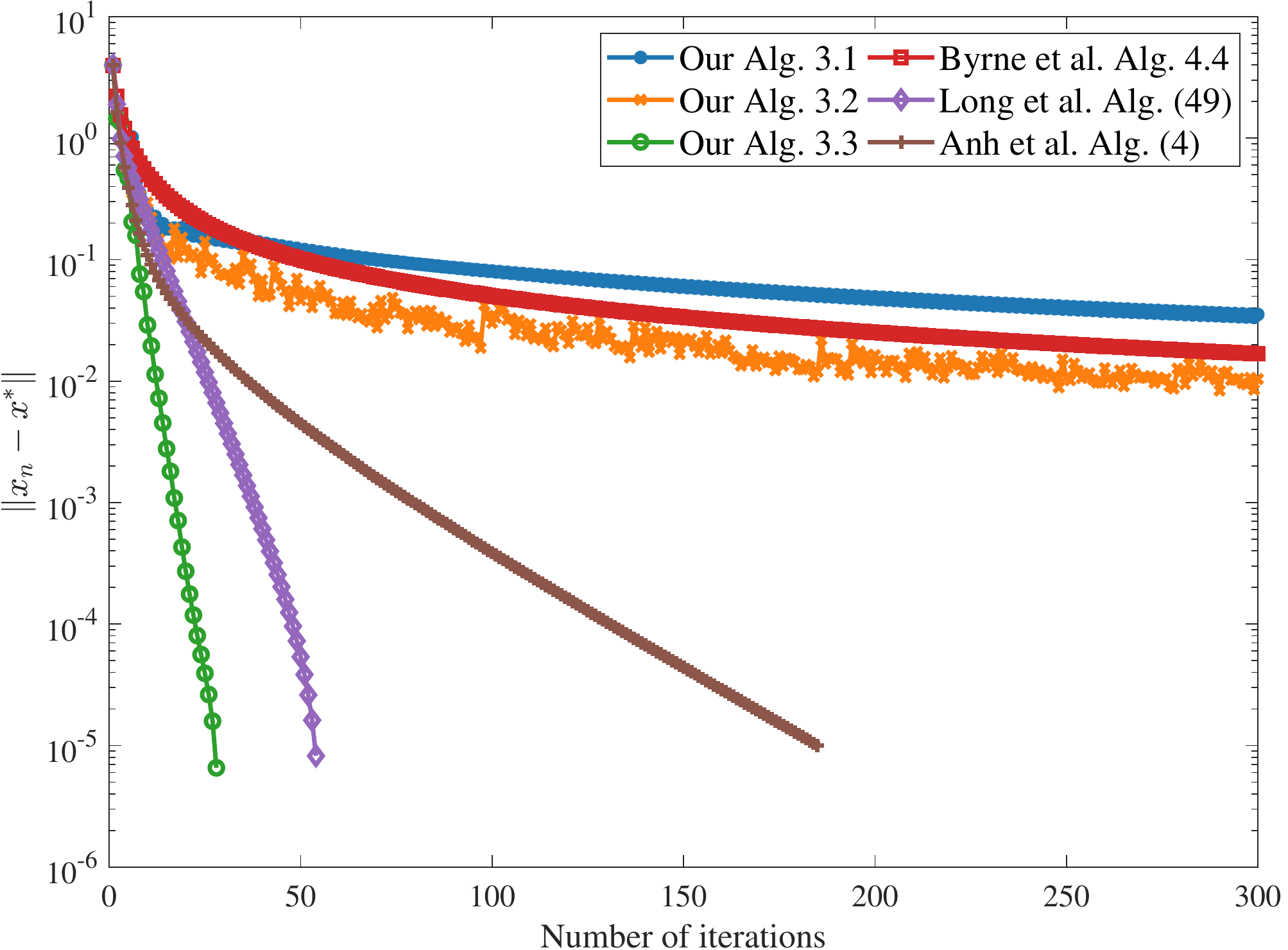}}
\caption{Numerical behavior of all algorithms under different stopping criteria}
\label{fig1-4}
\end{figure}

\begin{figure}[h]
\centering
\subfigure[$m= 60 $]{
\includegraphics[width=0.45\textwidth]{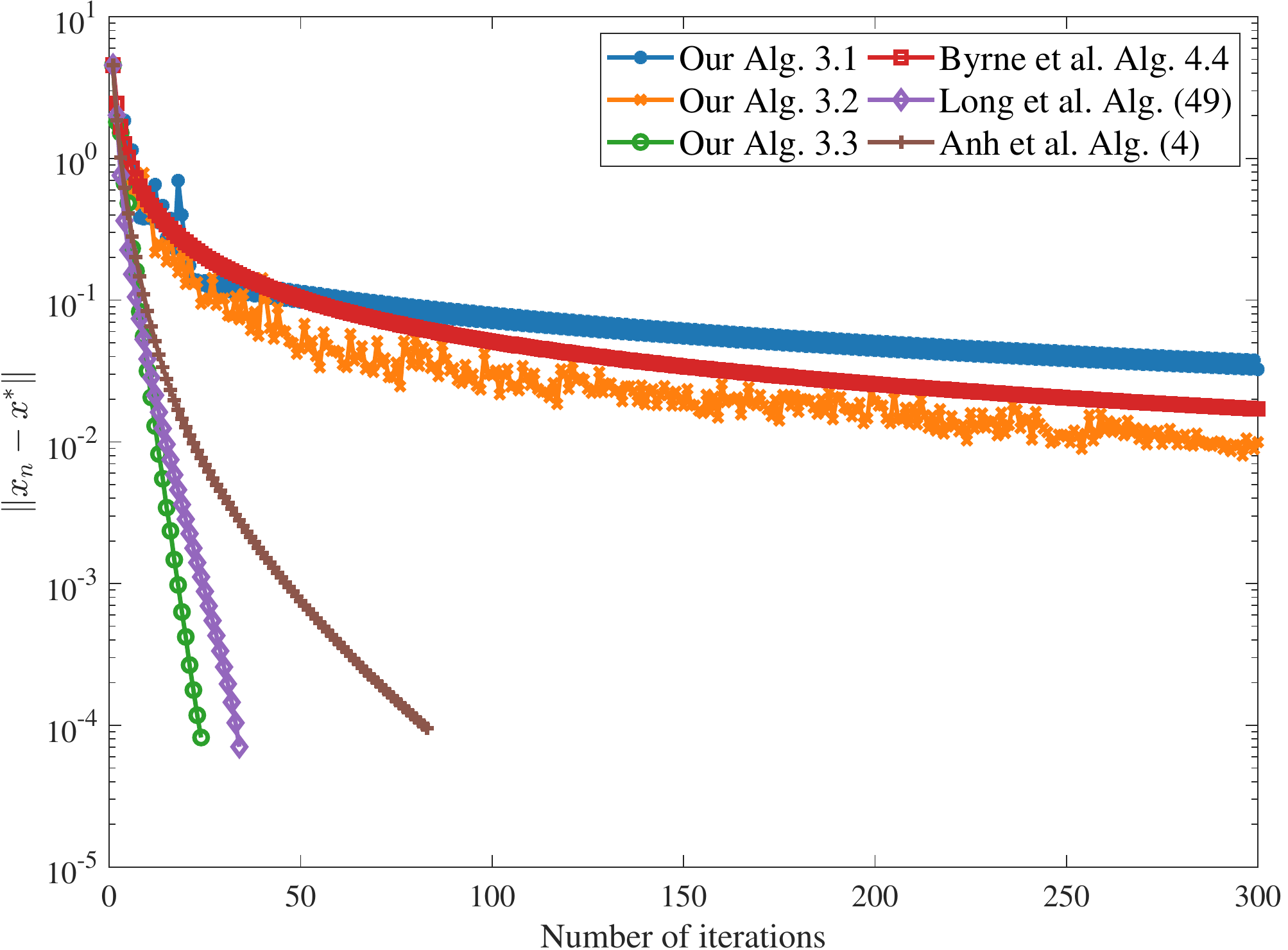}}
\subfigure[$m= 100 $]{
\includegraphics[width=0.45\textwidth]{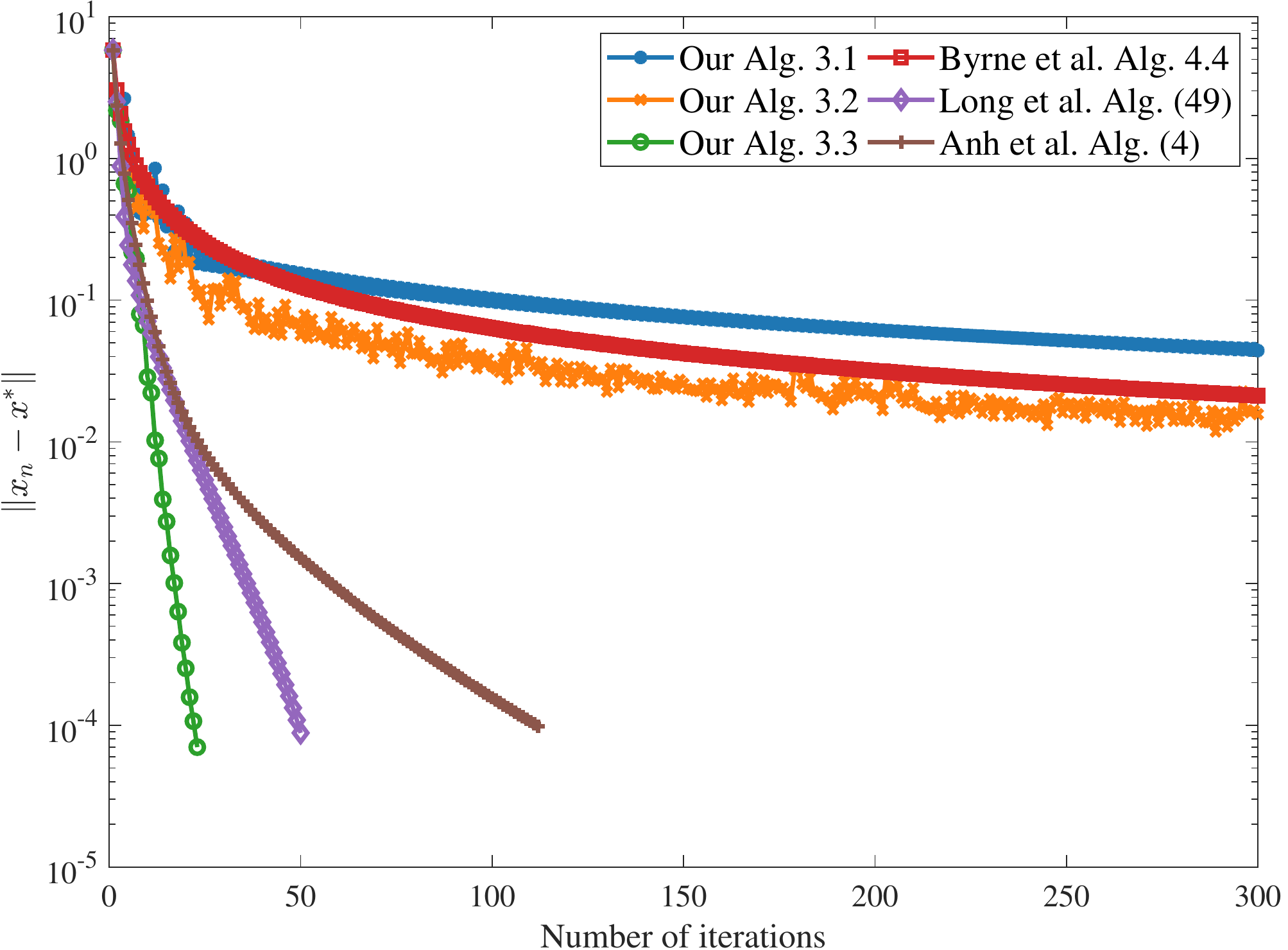}}
\subfigure[$m= 150 $]{
\includegraphics[width=0.45\textwidth]{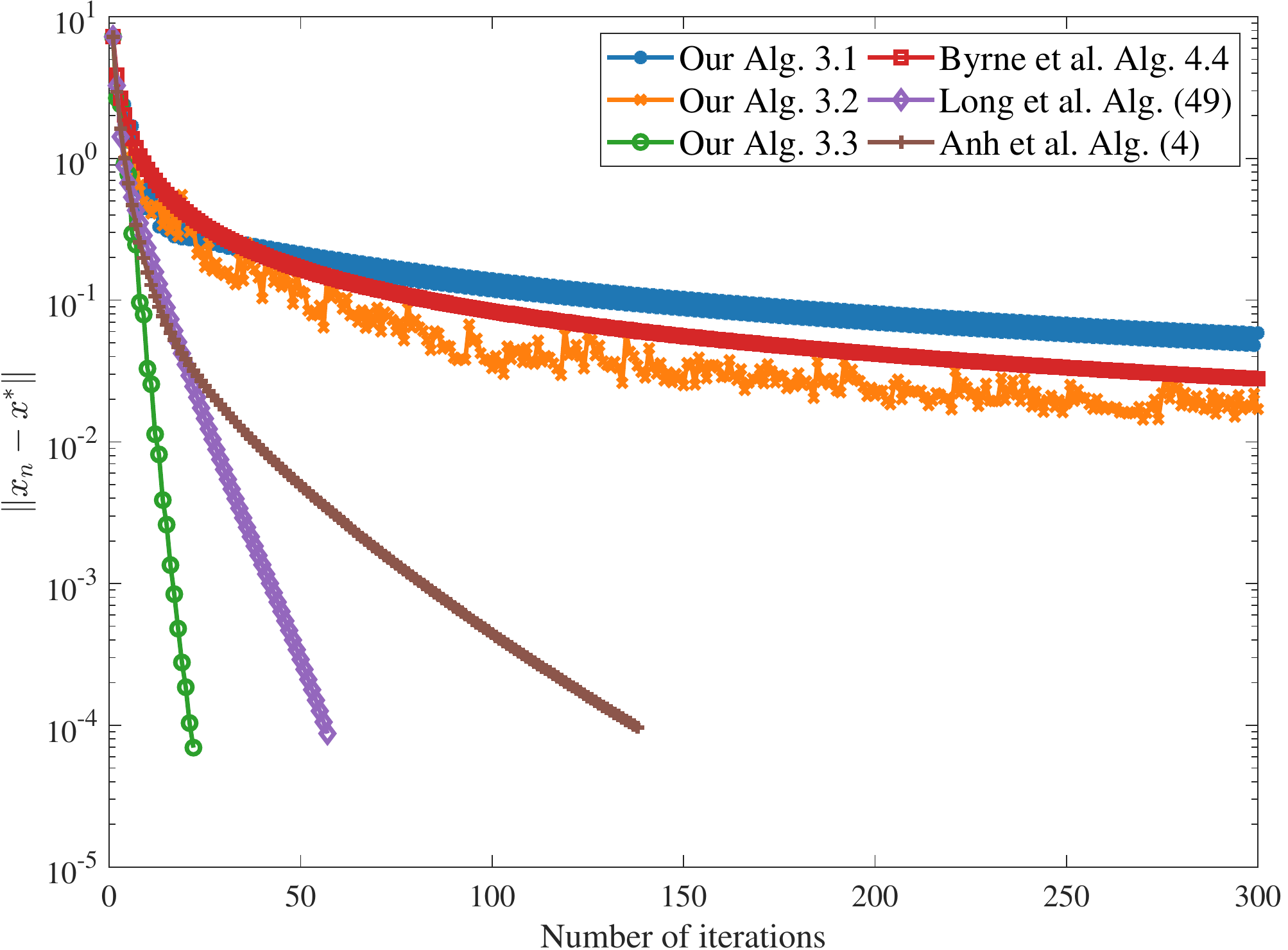}}
\subfigure[$m= 200 $]{
\includegraphics[width=0.45\textwidth]{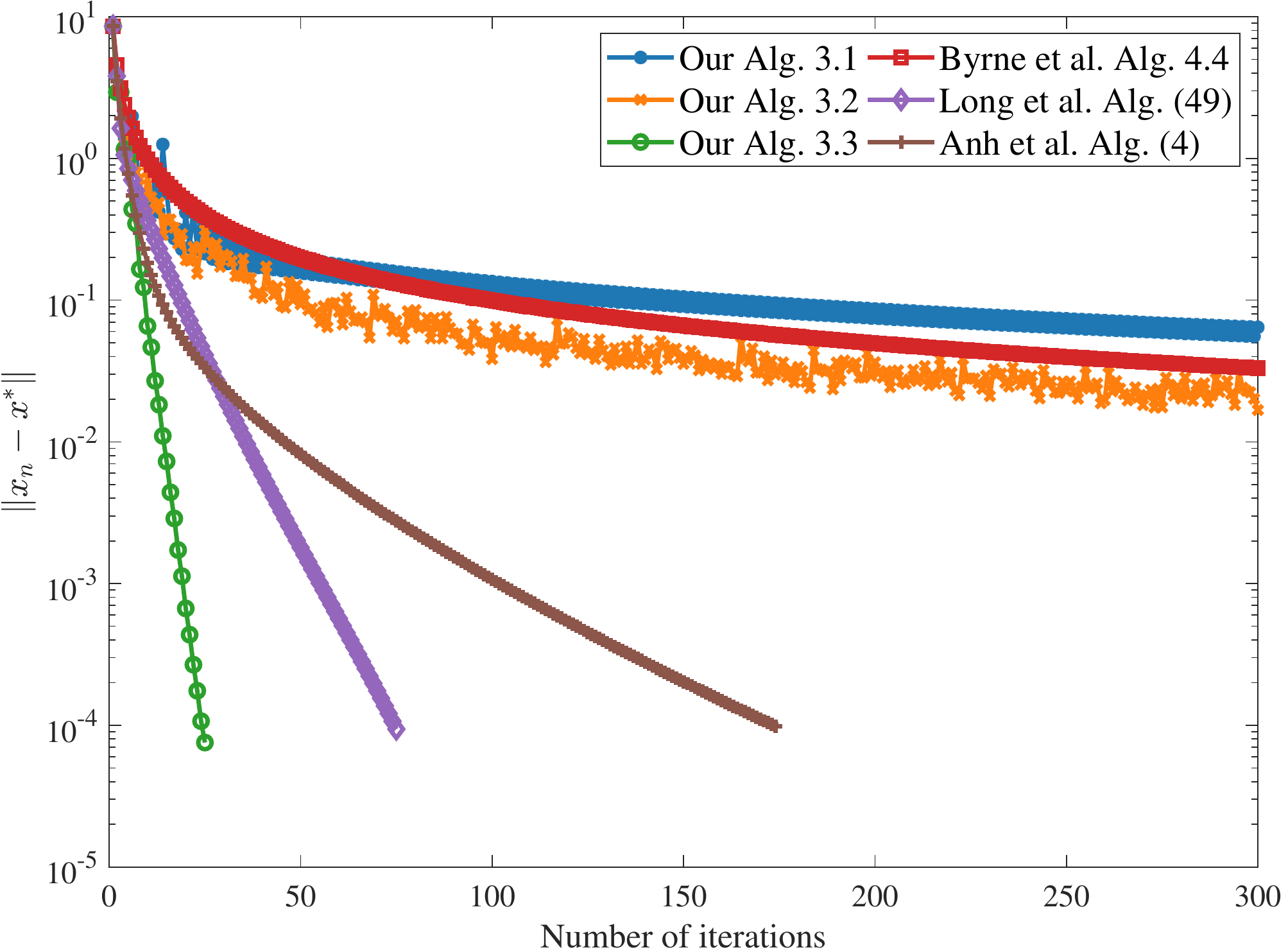}}
\caption{Numerical behavior of all algorithms in different dimensions}
\label{fig5-8}
\end{figure}

It can be seen from the above results that our Algorithms 3.1 and 3.2 are efficient and robust. These results are independent of the selection of initial values and dimensions. Moreover, note that our proposed Algorithms 3.1 and 3.2 are oscillating due to the dual reasons of inertial and projection, but the suggested Algorithm 3.3 performs very well.
\end{example}

\section{Conclusion}

In this paper, our innovation are twofold. One is to provide a self-adaptive step size selection which does not require the norm of the bounded linear operators. The other is to propose two types of projection algorithms (i.e., hybrid projection algorithms and shrinking projection algorithms), which combine inertial technique with the proposed self-adaptive step size. Under mild constraints, the corresponding strong convergence theorems of SVIP are obtained in the framework of Hilbert spaces. At the same time, our results are also extended to the split variational inequality problem, the split saddle point problem and the split minimization problem. In terms of numerical experiments, the effectiveness of our proposed algorithms are showed by comparing with some existing results.

\end{document}